\documentclass[11pt]{amsart}
\usepackage{amsmath,amssymb,textcomp,xifthen,psfrag,graphicx,color}
\usepackage{amsfonts,amsthm}
\usepackage{color}

\usepackage{fullpage}
\usepackage[utf8]{inputenc}
\usepackage[T1]{fontenc}
\usepackage{hyperref}
\date{}

\usepackage{pgfplots}
\pgfplotsset{compat=1.18}
\usepgfplotslibrary{external}
\usepgfplotslibrary{colorbrewer}
\newtheorem{theorem}{Theorem}
\newtheorem{lemma}[theorem]{Lemma}
\newtheorem{cor}[theorem]{Corollary}

\theoremstyle{definition} 
\newtheorem{remark}[theorem]{Remark}

\newcommand{\dual}[2]{\langle#1\hspace*{.5mm},#2\rangle}
\newcommand{\vdual}[2]{(#1\hspace*{.5mm},#2)}

\newcommand{\diam}{\mathrm{diam}}
\newcommand{\wilde}{\widetilde}

\def\div{{\rm div}}

\newcommand{\trace}{ {\rm tr}}

\newcommand{\supp}{{\rm supp}}

\DeclareMathOperator*{\argmin}{arg\,min}
\DeclareMathOperator*{\ran}{ran}

\newcommand{\bR}{\ensuremath{\mathbb{R}}}
\newcommand{\bG}{\ensuremath{\mathbb{G}}}
\newcommand{\bN}{\ensuremath{\mathbb{N}}}

\newcommand{\cI}{\ensuremath{\mathcal{I}}}
\newcommand{\cJ}{\ensuremath{\mathcal{J}}}

\newcommand{\cN}{\ensuremath{\mathcal{N}}}

\newcommand{\fb}{\ensuremath{\mathbf{b}}}
\newcommand{\fg}{\ensuremath{\mathbf{g}}}

\newcommand{\cT}{\ensuremath{\mathcal{T}}}

\newcommand{\cS}{\ensuremath{\mathcal{S}}}

\newcommand{\cP}{\ensuremath{\mathcal{P}}}

\newcommand{\fu}{\ensuremath{\mathbf{u}}}

\newcommand{\fA}{\ensuremath{\mathbf{A}}}

\newcommand{\uu}{\boldsymbol{u}}

\newcommand{\n}{\boldsymbol{n}}

\begin{document}
\title{Space-time finite element interpolation of nonsmooth functions satisfying boundary conditions}
\date{\today}

\author{Thomas F\"uhrer}
\address{Facultad de Matem\'{a}ticas, Pontificia Universidad Cat\'{o}lica de Chile, Santiago, Chile}
\email{thfuhrer@uc.cl}

\author{Gregor Gantner}
\address{Faculty of Electrical Engineering, Mathematics and Computer Science, University of Twente, Enschede, Netherlands}
\email{gregor.gantner@utwente.nl}

\author{Roberto Gonz\'alez}
\address{Departamento de Matem\'atica, Universidad T\'ecnica Federico Santa Mar\'ia, Valpara\'iso, Chile}
\email{roberto.gonzalezf@sansano.usm.cl}

\author{Michael Karkulik}
\address{Departamento de Matem\'atica, Universidad T\'ecnica Federico Santa Mar\'ia, Valpara\'iso, Chile}
\email{michael.karkulik@usm.cl}

\thanks{{\bf Acknowledgment.} 
This work was supported by ANID through FONDECYT projects and 1250070 (TF) and 1250604 (MK and RG).
GG acknowledges funding by the Nederlandse Organisatie voor Wetenschappelijk Onderzoek (NWO, Dutch Research Council) through the Vidi project Optimal adaptive space-time boundary and finite element methods (OASTMethods) with file number VI.Vidi.243.148 under the grant https://doi.org/10.61686/LTBER75172.}

\keywords{space-time methods,  parabolic PDEs, quasi-interpolation operator, inhomogeneous Dirichlet boundary conditions, first-order system least-squares methods}
\subjclass[2010]{65N30, 
                 65N12 
                 }
\begin{abstract}
  We construct Scott--Zhang-type space-time quasi-interpolation operators on tensor-product meshes for parabolic settings.
  Specifically, these are linear and uniformly bounded projections from the classical parabolic energy space
  onto a space-time tensor-product finite element space
  that preserve discrete Dirichlet boundary conditions on the lateral space-time boundary. We apply our operator in the context
  of space-time first-order system least-squares finite elements for parabolic equations to establish a quasi-optimal
  method for inhomogeneous Dirichlet boundary conditions.
\end{abstract}
\maketitle
\section{Introduction}
Motivated by the possibility of simultaneous adaptivity in
space and time and the availability of highly parallelizable solvers,
space-time variational formulations and associated numerical approximations for time-dependent PDEs
have been considered for at least fifty years, cf.~the pioneering work~\cite{ArgyrisS_69}.
But while the analysis of classical time-stepping methods is by now textbook knowledge, cf.~\cite{Thomee_06},
the mathematical theory of space-time finite elements has only recently been subject to a more thorough investigation.
A central topic in the analysis of finite elements is the interpolation of non-smooth functions.
As solutions to variational formulations of PDE lack the necessary smoothness for point evaluation,
more advanced interpolation operators have to be built. These operators are referred to as \textit{quasi-interpolation operators}.
In the context of $h$-version finite elements with fixed polynomial degree,
the foundational construction given by Cl\'ement~\cite{Clement_75} proceeds by local averaging on element patches, and is hence defined on $L^2$.
This approach was extended in various directions to account for curvilinear elements, Hermite elements, common vector-valued finite element spaces, and $hp$-methods,
cf.~\cite{Bernardi_89,BernardiGirault_98,GiraultScott_02,ErnGuermond_17,Melenk_05},
and often with a view to residual a-posteriori error estimation, cf.~\cite{Carstensen_99,Carstensen_06}.
Arguably the most ubiquitous quasi-interpolation operator is due to Scott and Zhang~\cite{ScottZhang_90}. This operator is defined
by local averaging against dual basis functions, and therefore turns out to be a projection onto the finite element space.
Furthermore, degrees of freedom on the boundary of the domain are determined by averaging on the boundary itself,
and therefore this operator reproduces discrete boundary values.
Among many others, applications of quasi-interpolation operators with such properties include
convergence analysis of adaptive finite element methods~\cite{CarstensenFPP_14} and
the incorporation of inhomogenous Dirichlet boundary conditions into finite element schemes~\cite{AuradaFKPP_13}, cf. Section~\ref{sec:ihom} below.

The approach of Scott and Zhang to average against dual basis functions can be extended to dual spaces, i.e., Sobolev spaces of negative order.
Corresponding works are~\cite{Fuehrer_21,DieningST_23,Tantardini_13}, where projection operators defined on $H^{-1}(\Omega)$
for polyhedral domains $\Omega\subset\bR^d$ mapping into finite element spaces are constructed. Those finite element spaces can either be equipped with
homogeneous boundary conditions or not.
Applications include regularization of rough data in PDEs~\cite{Fuehrer_24,FuehrerHK_22},
multilevel decompositions and operator preconditioning~\cite{BramblePV_00,WuZ_17,Fuehrer_21,StevensonvV_20,StevensonvV_21},
and, the main topic of the work at hand, space-time finite element methods for parabolic problems \cite{FuehrerK_21,GantnerS_21,GantnerS_24}.
The energy space for parabolic PDEs posed on $J\times \Omega$ with a time interval $J$ and a spatial domain $\Omega\subset\bR^d$ is given by
$L^2(J;H^1(\Omega))\cap H^1(J;H^{-1}(\Omega))$. The latter space allows for the application of the aforementioned
spatial operators defined on $H^{-1}(\Omega)$ pointwise in time.
Coupled with a quasi-interpolation operator in time, this gives a
locally defined and locally stable projection operator on discrete spaces on tensor-product meshes in space-time,~\cite{DieningST_23,StevensonS_23}.
If the spatial operators map into a finite element space with homogeneous boundary conditions, then the space-time interpolation operator
exhibits homogeneous boundary conditions on the lateral space-time boundary. If the spatial operators map into a finite element space
without boundary conditions, then so will the space-time interpolation operator. In the latter case, discrete boundary conditions on the lateral
space-time boundary are in general not preserved by the space-time interpolation operator.
The second key property of Scott and Zhang's original construction --- the reproduction of 
discrete boundary values --- will be provided in the present work.
We will follow the original construction of Scott and Zhang for the elliptic case, and define degrees of freedom
of the space-time quasi-interpolation operator on the boundary by averaging against a corresponding dual basis of functions living on the
lateral space-time boundary $J\times\partial\Omega$. The boundedness of Scott--Zhang-type operators in $L^2$-based spaces is straight-forward
and uses the Cauchy--Schwarz inequality and properties of the dual basis functions.
The boundedness of derivatives, on the other hand, is shown by applying
inverse estimates to the finite element basis functions. These inverse estimates produce negative powers of the mesh-size, which have to be
accounted for by using Poincar\'e inequalities. These Poincar\'e inequalities require that the operator reproduces constants locally.
In the present case of a parabolic setting, the temporal derivatives enter the norm of interest via the term $H^1(J;H^{-1}(\Omega))$.
Hence, special parabolic Poincar\'e estimates have to be used to account for the anisotropy of regularities. It turns out that the uniform boundedness
in the parabolic energy space requires a parabolic scaling.

A common practice to incorporate inhomogeneous Dirichlet boundary conditions into finite element methods is to extend the boundary conditions
into the interior. The remaining part of the solution then fulfills homogeneous Dirichlet boundary conditions and can be computed by
suitable methods. While those methods usually are quasi-optimal, it is not obvious that this quasi-optimality carries over to
the resulting method for inhomogeneous boundary conditions. It turns out that the existence of a suitable Scott--Zhang-type operator is a sufficient
conditions for quasi-optimality,~\cite{AuradaFKPP_13,Stevenson_24}. In the case of space-time first-order system least-squares finite elements for
parabolic equations, we will apply our operator to prove such a result.
\section{Inhomogeneous Boundary Conditions}\label{sec:ihom}
An immediate application of Scott--Zhang-type operators is the incorporation of inhomogeneous Dirichlet data into numerical
schemes tailored to homogeneous boundary conditions.
In the special case of Lagrange finite elements for the Laplace equation, the next result can be found in~\cite[Prop.~2.3]{AuradaFKPP_13}.
We closely follow~\cite{Stevenson_24} for a more abstract presentation of this topic. In the following Lemma, it is instructive to
think of $F$ as a PDE operator, of $T$ as a trace operator, and of $P_h$ as an operator approximating Dirichlet traces.
\begin{lemma}\label{lemma:ihom}
  Let $X,Y_1,Y_2$ be Hilbert spaces and $F:X\to Y_1$ and $T:X\to Y_2$ linear and bounded.
  \begin{enumerate}
    \item[(i)] Set $X_0 := \ker T$ and suppose that $F_0 := F|_{X_0}:X_0\to Y_1$ is boundedly invertible and that $T$ is surjective.
      Then, $G:=(F,T):X\to Y:=Y_1\times Y_2$ is boundedly invertible.
    \item[(ii)] In addition to the assumptions of \rm{(i)},
      let $X_h$ be a finite-dimensional subspace of $X$ and $X_{0,h}:=X_h\cap X_0$.
      Suppose that we have a numerical method
      $F_{0,h}^{-1}:Y_1\to X_{0,h}$ such that $\bG_{0,h}:X\to X_{0,h}$ defined by $\bG_{0,h} := F_{0,h}^{-1}F$ is
      a linear and uniformly bounded projection. Furthermore, suppose that $P_h:Y_2 \to \ran T|_{X_h}$ is a linear and uniformly bounded
      projection and $E_h:\ran T|_{X_h}\to X_h$ a (not necessarily linear and/or bounded) extension operator, i.e., $TE_h$ is the identity.
      Then, the operator $\bG_h:X\to X_h$ defined by
      \begin{align*}
	\bG_h := E_hP_hT + \bG_{0,h} (1-E_hP_h T)
      \end{align*}
      is independent of the choice of the operator $E_h$.
    \item[(iii)] In additions to the assumptions of \rm{(i)} and \rm{(ii)},
      suppose that $J_h:X\to X_h$ is a linear and uniformly bounded projection which fulfills
      \begin{align}\label{eq:cond1}
	\forall v \in X:\quad T v \in \ran T|_{X_h} \quad\Longrightarrow\quad T J_h v = Tv.
      \end{align}
      Then, $\bG_h$ is quasi-optimal, i.e.,
      \begin{align*}
	\forall u\in X:\quad \| u - \bG_{h} u \|_X \lesssim \inf_{w_h\in X_{h}} \| u - w_h \|_X.
      \end{align*}
  \end{enumerate}
\end{lemma}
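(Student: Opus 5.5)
For (i), injectivity and surjectivity of $G$ follow directly from the assumptions; the bounded inverse then comes from the open mapping theorem, or explicitly as follows. Since $T$ is surjective onto the Hilbert space $Y_2$, it has a bounded right inverse $R:=T^*(TT^*)^{-1}$ (closed range plus surjectivity makes $TT^*$ boundedly invertible). Given $(y_1,y_2)\in Y$, I set
\[
x := Ry_2 + F_0^{-1}(y_1 - FRy_2).
\]
Then $Tx = y_2$, because $F_0^{-1}$ maps into $X_0=\ker T$, and $Fx = FRy_2 + y_1 - FRy_2 = y_1$. This gives surjectivity with $\|x\|_X\lesssim \|y_1\|+\|y_2\|$. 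For injectivity: if $Gx=0$, then $x\in X_0$ and $F_0x=0$, so $x=0$.

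For (ii), let $E_h,E_h'$ be two extensions and fix $v\in X$. Put $g:=P_hTv\in\ran T|_{X_h}$ and $d:=E_hg-E_h'g$. Then $d\in X_h$ and $Td=g-g=0$, so $d\in X_{0,h}$. The difference of the two operators applied to $v$ is
\[
d - \bG_{0,h}d.
\]
This vanishes because $\bG_{0,h}$ is a projection onto $X_{0,h}$, so $\bG_h$ is independent of $E_h$.

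For (iii), I will first show that $\bG_h$ is a projection onto $X_h$ and then bound it uniformly; quasi-optimality then follows from
\[
\|u-\bG_hu\|_X=\|(1-\bG_h)(u-w_h)\|_X\le(1+\|\bG_h\|)\|u-w_h\|_X .
\]
\begin{itemize}
\item \emph{Projection property.} If $w\in X_h$, then $Tw\in\ran T|_{X_h}$, so $P_hTw=Tw$. Hence $w-E_hP_hTw\in X_h\cap\ker T=X_{0,h}$, and $\bG_{0,h}$ fixes it, giving $\bG_hw=w$.
\item \emph{Uniform boundedness.} Since $E_h$ may be nonlinear and unbounded, I will use the freedom from (ii) and choose a good extension adapted to $v$, namely $E_hP_hTv:=J_hRP_hTv$.
\item \emph{This is an extension.} Set $z:=RP_hTv$. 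Then $Tz=P_hTv\in\ran T|_{X_h}$, and \eqref{eq:cond1} gives $TJ_hz=Tz=P_hTv$. So $J_hz$ is an admissible value of the extension at $g=P_hTv$. Independence from (ii) holds pointwise in $g$, so a $v$-wise choice is legitimate: define $E_h(g):=J_hRg$ on the whole range, which works for every $g$ by the same argument.
\item \emph{The bound.} With this choice, $\|E_hP_hTv\|_X\lesssim\|J_h\|\,\|R\|\,\|P_h\|\,\|T\|\,\|v\|_X$, and then
\[
\|\bG_hv\|_X\le\|E_hP_hTv\|_X+\|\bG_{0,h}\|\bigl(\|v\|_X+\|E_hP_hTv\|_X\bigr)\lesssim\|v\|_X
\]
uniformly in $h$.
\end{itemize}

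I expect the main subtlety to be legitimizing this choice of extension: verifying that $J_hR$ satisfies $TJ_hRg=g$ for all $g\in\ran T|_{X_h}$. This is exactly where \eqref{eq:cond1} is needed, together with a bounded right inverse $R$ of $T$ taken from (i).
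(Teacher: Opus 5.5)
Your proof is correct, and for (iii) it takes a genuinely different and shorter route than the paper. For (i) the paper simply cites \cite{GantnerS_21}, and your argument for (ii) is the paper's.

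\textbf{The paper's route for (iii).} It keeps $E_h$ arbitrary and compares $\bG_h u$ with the exact solution $E_hP_hTu + F_0^{-1}F(1-E_hP_hT)u$ for the modified boundary datum $P_hTu$. Quasi-optimality of $\bG_{0,h}$ on $X_{0,h}$ and the bound on $G^{-1}$ then give
$\|u-\bG_hu\|_X \lesssim \|(1-P_h)Tu\|_{Y_2} + \inf\{\|u-\wilde w_h\|_X : \wilde w_h\in X_h,\ T\wilde w_h = P_hTu\}$.
Only at the end is $J_h$ used, together with the right inverse $E=G^{-1}(0,\cdot)$, to build the competitor $J_hu - J_hE(Tu-P_hTu)$. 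The projection property of $J_h$ is then needed to reach $\inf_{w_h\in X_h}\|u-w_h\|_X$.

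\textbf{Your route.} You use (ii) to fix the linear extension $E_h := J_hR$, which is admissible precisely because of \eqref{eq:cond1}. This makes $\bG_h$ a linear, uniformly bounded projection onto $X_h$, and the standard factor $(1+\|\bG_h\|)$ finishes the proof.

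\textbf{What each approach buys.} In (iii) your argument uses neither the invertibility of $F_0$ (resp.\ $G$) nor the projection property of $J_h$. It needs only a bounded right inverse of $T$, and it yields uniform boundedness of $\bG_h$ as a by-product. The paper's route instead produces an error splitting into a boundary-approximation term and a constrained best-approximation term. That splitting holds without any $J_h$ and shows exactly where \eqref{eq:cond1} enters.

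One small point you should state explicitly: the identity $u-\bG_hu=(1-\bG_h)(u-w_h)$ requires $\bG_h$ to be linear. You get this for free from your linear choice of $E_h$ combined with (ii).
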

\begin{proof}
  Statement (i) can be found in~\cite[Lemma~2.7]{GantnerS_21}. To prove statement (ii), note first that the given assumptions imply
  that $\bG_{0,h}$ is quasi-optimal, that is
  \begin{align}\label{thm:ihom:eq1}
    \forall u\in X:\quad \| u - \bG_{0,h} u \|_X \leq (1+\| \bG_{0,h} \|) \inf_{w_h\in X_{0,h}} \| u - w_h \|_X.
  \end{align}
  Now, the fact that $\bG_h$ does not depend on the particular choice of the operator $E_h$ can be seen as follows.
  Let $\wilde E_h:\ran T|_{X_h}\to X_h$ be another extension operator.
  Then, $(\wilde E_h-E_h)P_hT:X\to X_{0,h}$. Using that $\bG_{0,h}$ is a projection onto $X_{0,h}$ gives
  \begin{align*}
    \bG_{0,h} (\wilde E_h-E_h)P_hT = (\wilde E_h-E_h)P_hT,
  \end{align*}
  or, put otherwise,
  \begin{align*}
    E_hP_hT + \bG_{0,h} (1-E_hP_h T) = \wilde E_hP_hT + \bG_{0,h} (1-\wilde E_hP_h T).
  \end{align*}
  To conclude statement (iii), note first that
  \begin{align*}
    \bG_{0,h} F_0^{-1}F(1-E_hP_hT) = F_{0,h}^{-1}FF_0^{-1}F(1-E_hP_hT) = \bG_{0,h} (1-E_hP_hT).
  \end{align*}
  The triangle inequality and quasi-optimality~\eqref{thm:ihom:eq1} then give
  \begin{align*}
    \| u - \bG_h u \|_X
    &\leq \| u - E_hP_hTu - F_0^{-1}F(1-E_hP_hT)u \|_X
    + \| F_0^{-1}F(1-E_hP_hT)u - \bG_{0,h}(1-E_hP_hT)u \|_X\\
    &= \| u - E_hP_hTu - F_0^{-1}F(1-E_hP_hT)u \|_X
    + \| F_0^{-1}F(1-E_hP_hT)u - \bG_{0,h}F_{0}^{-1}F(1-E_hP_hT)u \|_X\\
    &\leq \| u - E_hP_hTu - F_0^{-1}F(1-E_hP_hT)u \|_X + (1+\|\bG_{0,h}\|) \inf_{w_h\in X_{0,h}} \| F_0^{-1}F(1-E_hP_hT)u - w_h \|_X.
  \end{align*}
  In order to treat the last term on the right-hand side, for $w_h\in X_{0,h}$ we estimate
  \begin{align*}
    \| F_0^{-1}F(1-E_hP_hT)u - w_h \|_X \leq \| u - E_hP_hTu - F_0^{-1}F(1-E_hP_hT)u \|_X + \| u - E_hP_hTu - w_h \|_X,
  \end{align*}
  and, hence,
  \begin{align*}
    \| u - \bG_h u \|_X &\leq (2+\|\bG_{0,h}\|) \| u - E_hP_hTu - F_0^{-1}F(1-E_hP_hT)u \|_X +
    (1+\|\bG_{0,h}\|)\inf_{\substack{\wilde w_h\in X_h\\T\wilde w_h = P_hTu}} \|u-\wilde w_h\|_X.
  \end{align*}
  Next, as $FF_0^{-1}=\mathrm{id}$, $TF_0^{-1}=0$, and $TE_h=\mathrm{id}$, we have
  \begin{align*}
    F\left( (1-E_hP_hT) - F_0^{-1}F(1-E_hP_hT) \right) &= 0,\\
    T\left( (1-E_hP_hT) - F_0^{-1}F(1-E_hP_hT) \right) &= (1-P_h)T,
  \end{align*}
  and conclude
  \begin{align*}
    \| u - \bG_h u \|_X \leq ( 2+\|\bG_{0,h}\|) \| G^{-1} \| \| (1-P_h)Tu \|_{Y_2} +
    (1+\|\bG_{0,h}\|) \inf_{\substack{\wilde w_h\in X_h\\T\wilde w_h = P_hTu}} \|u-\wilde w_h\|_X.
  \end{align*}
  The operator $E:Y_2\to X$ defined by $E(\cdot):= G^{-1}(0,\cdot)$ is a linear and bounded right-inverse of $T$.
  Note that~\eqref{eq:cond1} implies 
  \begin{align*}
    T \left( J_h u -J_h E(Tu-P_hTu) \right) = P_hTu,
  \end{align*}
  and, hence,
  \begin{align*}
    \inf_{\substack{\wilde w_h\in X_h\\T\wilde w_h = P_hTu}} \|u-\wilde w_h\|_X &\leq \| u - J_h u +J_h E(Tu-P_hTu) \|_X.
  \end{align*}
  Then, using that $J_h$ is a projection onto $X_h$, we obtain
  \begin{align*}
    \| u - \bG_h u \|_X \leq ( 2+\|\bG_{0,h}\| ) \left[ \left( \| G^{-1} \| + \|J_h\|\|E\| \right)\| (1-P_h)Tu \|_{Y_2} +(1+\|J_h\|)\inf_{w_h\in X_h} \| u - w_h \|_X \right].
  \end{align*}
  Finally, as $P_h$ is a projection, we obtain the estimate
  \begin{align*}
    \| (1-P_h)Tu \|_{Y_2} \leq \|1-P_h\|\|T\| \inf_{w_h\in X_h} \|u-w_h\|_X,
  \end{align*}
  and, consequently,
  \begin{align*}
    \| u - \bG_h u \|_X \leq ( 2+\|\bG_{0,h}\| ) \left[ \left( \| G^{-1} \| + \|J_h\|\|E\| \right)\|1-P_h\|\|T\| +(1+\|J_h\|) \right]\inf_{w_h\in X_h} \| u - w_h \|_X.
  \end{align*}
\end{proof}
\section{Space-time Scott--Zhang interpolation operators}
\subsection{Mathematical setting and main result}
Let $\Omega\subset\bR^d$ be a polyhedral Lipschitz domain, $J=(0,T)$ a finite time interval,
and $Q:= J\times \Omega\subset \bR^{1+d}$ the space-time cylinder.
We denote the spatial boundary by $\Gamma:=\partial\Omega$ and the lateral space-time boundary by $\Sigma:= J\times\Gamma$.
The differential operator $\partial_t$ denotes the temporal derivative, while $\nabla_x$ denotes the gradient with respect to the
spatial variables.
We will use standard notation for Lebesgue and Sobolev spaces on $\Omega$ or $\Gamma$, for example $L^2(D) = H^0(D), H^1(D), H^1_0(D)$, for subsets
$D$ of $\Omega$ or $\Gamma$,
or the fractional trace space $H^{1/2}(\Gamma)$.
Additionally, for open subintervals $I\subset J$, we write $L^2(I;X)$ for the Bochner space of all measurable and square-integrable functions mapping from
the time interval $I$ to some Banach space $X$.
For $r\in(0,1]$, we also use Bochner--Sobolev spaces $H^r(I;X) := \left\{ v\in L^2(I;X) \mid \| v \|_{H^r(I;X)}<\infty \right\}$, where
\begin{align*}
  \| v \|_{H^r(I;X)}^2 &:= \| v \|_{L^2(I;X)}^2 + | v |_{H^r(I;X)}^2,\\
  | v |_{H^r(I;X)}^2 &:=
  \begin{cases}
    \int_I\int_I \frac{\| v(s)-v(t) \|_{X}^2}{|s-t|^{1+2r}}\,ds\,dt & r\in(0,1),\\
    \| \partial_t v \|_{L^2(I;X)}^2 & r=1.
  \end{cases}
\end{align*}

Let $\cT$ be a conforming simplicial partition of a domain or manifold $D\subset\bR^n$, $n\in\bN$ (by conforming we mean that there are no
hanging nodes).
For $p\in\bN$, $\cS^p(\cT)$ denotes the space of globally continuous $\cT$-piecewise polynomials of degree at most $p$.
The notation $\cS^p_0(\cT)$ indicates the subspace of functions vanishing on the boundary of $D$.
For $K\in\cT$, we let $\omega_K\subseteq\cT$ be the (relatively) open set covered by the simplices sharing at least a node with $K$,
\begin{align*}
  \overline{\omega_K} := \bigcup_{K' \text{ sharing a node with } K} \overline{K'}.
\end{align*}
\medskip

The energy space for solutions to parabolic PDEs posed on the space-time cylinder $Q$ is
\begin{align*}
  X := L^2(J;H^1(\Omega))\cap H^1(J;H^{-1}(\Omega)).
\end{align*}

Traces of functions in $X$ on the lateral space-time boundary $\Sigma$ belong to
\begin{align*}
 H^{1/2,1/4}(\Sigma):=
 L^2(J;H^{1/2}(\Gamma)) \cap H^{1/4}(J;L^2(\Gamma))
\end{align*}
cf.~\cite{LionsMagenes_72,Costabel_90},
and the corresponding linear and bounded trace operator will be denoted by $\trace_\Sigma$.
\bigskip

We use partitions $\cT_{t,x}$ of $Q$ into time-space prisms $K=K_t\times K_x$, where $K_t\subset\bR$ is a time interval and
$K_x\subset\bR^d$ a $d$-simplex. As in~\cite{GantnerS_24}, we assume that $\cT_{t,x}$ is of tensor-product form,
that is $\cT_{t,x} = \cT_t\otimes\cT_x$, where $\cT_x$ is a conforming simplicial partition of $\Omega$,
and $\cT_t$ is a partition of $J$ into intervals. For intervals $K_t$ and simplices $K_x$, we
denote by $h_{K_t}$ and $h_{K_x}$ their diameter. Furthermore, $h_x\in L^\infty(\Omega)$ is defined by $h_x|_{K_x}=h_{K_x}$ for all $K_x\in\cT$,
and $h_t\in L^\infty(J)$ is defined analogously,
For polynomial degrees $\ell,k\in\bN$, we define tensor-product spaces
\begin{align*}
  \cS^{k,\ell}(\cT_{t,x}) := \cS^k(\cT_t)\otimes\cS^\ell(\cT_x),\\
  \cS^{k,\ell}_0(\cT_{t,x}) := \cS^k(\cT_t)\otimes\cS^\ell_0(\cT_x).
\end{align*}
In particular, a function $u_h\in \cS^{k,\ell}(\cT_{t,x})$ locally has the form $u_h|_K \in \cP^k(K_t)\otimes \cP^\ell(K_x)$.
Our main result is the following.
\begin{theorem}\label{thm:main}
  Let $\cT_t$ be a partition of $J$ into intervals and $\cT_x$ be conforming simplicial partition of $\Omega$.
  Then, there exists a linear operator
  \begin{align*}
    \cJ_{\cS^{k,\ell}(\cT_{t,x})}: L^2(J;H^1(\Omega)) \cap H^1(J;H^{-1}(\Omega))\to \cS^{k,\ell}(\cT_{t,x})
  \end{align*}
  which
  \begin{enumerate}
    \item[(i)] \textbf{is a projection} onto $\cS^{k,\ell}(\cT_{t,x})$, that is,
      \begin{align*}
	v \in \cS^{k,\ell}(\cT_{t,x}) \quad\Longrightarrow\quad \cJ_{\cS^{k,\ell}(\cT_{t,x})} v = v,
      \end{align*}
    \item[(ii)] \textbf{reproduces discrete boundary conditions}, that is,
      \begin{align*}
	\forall v\in L^2(J;H^1(\Omega)) \cap H^1(J;H^{-1}(\Omega)):\quad \trace_\Sigma v\in \ran\trace_\Sigma|_{\cS^{k,\ell}(\cT_{t,x})}
	\quad\Longrightarrow\quad \trace_\Sigma \cJ_{\cS^{k,\ell}(\cT_{t,x})} v=\trace_\Sigma v,
      \end{align*}
    \item[(iii)] \textbf{fulfills the local stability estimates}
      \begin{align*}
	\| \partial_t \cJ_{\cS^{k,\ell}(\cT_{t,x})} v \|_{ L^2(K_t;H^{-1}(\Omega))}
	&\lesssim \| \partial_tv \|_{L^2(\omega_{K_t};H^{-1}(\Omega))} + h_{K_t}^{-1} \| h_{x}^2 \nabla_x v \|_{L^2(\omega_{K_t}\times \Omega)}\\
	\| \cJ_{\cS^{k,\ell}(\cT_{t,x})} v \|_{L^2(K_t\times K_x)}
        &\lesssim
        \| v \|_{L^2(\omega_{K_t}\times\omega_{K_x})}
	+ h_{K_x}\| v \|_{L^2(\omega_{K_t}\times\omega_{K_x})}\\
        \| \nabla_x \cJ_{\cS^{k,\ell}(\cT_{t,x})} v \|_{L^2(K_t\times K_x)}
        &\lesssim \| \nabla_x v \|_{L^2(\omega_{K_t}\times\omega_{K_x})}
	+ h_{K_t} h_{K_x}^{-2} \| \partial_t v \|_{L^2(\omega_{K_t};H^{-1}(\omega_{K_x}))},
      \end{align*}
      for all $v \in L^2(J;H^1(\Omega)) \cap H^1(J;H^{-1}(\Omega))$ and $K_t\times K_x\in\cT_{t,x}$,
    with hidden constants depending only on local quasi-uniformity of $\cT_t$ and shape-regularity (or local quasi-uniformity
    for $d=1$) of $\cT_x$, and the polynomial degrees $k$ and $\ell$.
  \end{enumerate}
\end{theorem}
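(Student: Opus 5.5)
The plan is to build $\cJ := \cJ_{\cS^{k,\ell}(\cT_{t,x})}$ as a Scott--Zhang operator on the tensor-product nodal basis, with the dual functionals themselves of tensor-product form. Fix nodal bases $\{\varphi_i\}$ of $\cS^k(\cT_t)$ and $\{\psi_j\}$ of $\cS^\ell(\cT_x)$. For each temporal node $i$ choose an element $K_t^{(i)}\ni$ node and an $L^2(K_t^{(i)})$-dual function $\varphi_i^*\in\cP^k(K_t^{(i)})$, exactly as in the one-dimensional Scott--Zhang construction. For each spatial node $j$ proceed as follows. If the node lies in the interior, choose a simplex $K_x^{(j)}\ni$ node and a dual function $\psi_j^*\in\cP^\ell(K_x^{(j)})$. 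If the node lies on $\Gamma$, choose instead a boundary facet $F^{(j)}\subset\Gamma$ containing it and a dual function $\psi_j^*\in\cP^\ell(F^{(j)})$ with respect to $L^2(F^{(j)})$. Then set
\begin{align*}
  \cJ v := \sum_{i,j} \Big(\int_{K_t^{(i)}} \varphi_i^*(t)\,\langle v(t),\psi_j^*\rangle\,dt\Big)\,\varphi_i\otimes\psi_j .
\end{align*}
Here $\langle v(t),\psi_j^*\rangle$ is an integral over $K_x^{(j)}$ for interior nodes and over $F^{(j)}$ of the trace for boundary nodes. This is well defined a.e.\ in $t$ because $v(t)\in H^1(\Omega)$.

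Properties (i) and (ii) follow from biorthogonality. For (i), if $v=\varphi_{i'}\otimes\psi_{j'}$, the coefficient of index $(i,j)$ is $\delta_{ii'}\delta_{jj'}$; for the boundary nodes this uses that the restriction of $\psi_{j'}$ to $F^{(j)}$ lies in $\cP^\ell(F^{(j)})$. For (ii), the coefficients attached to boundary nodes depend only on $\trace_\Sigma v$. The restriction of $\cJ v$ to $\Sigma$ involves only the boundary basis functions, whose traces form the tensor-product basis of $\cS^k(\cT_t)\otimes\cS^\ell(\cT_x|_\Gamma)$. Hence $\trace_\Sigma\cJ v$ is precisely the boundary Scott--Zhang projection of $\trace_\Sigma v$, which reproduces $\trace_\Sigma v$ whenever it is discrete.

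For (iii), the $L^2$ bound follows from Cauchy--Schwarz together with $\|\varphi_i^*\|\lesssim h^{-1/2}$ and $\|\psi_j^*\|\lesssim h^{-d/2}$. For boundary nodes one additionally applies a scaled trace inequality on $K_x^{(j)}$, which produces the term $h_{K_x}\|\nabla_x v\|$; the statement presumably means $\nabla_x v$ in its second term. The derivative bounds are where the anisotropy enters. For $\nabla_x$, I would use that $\cJ$ reproduces functions that are constant in $x$ locally, i.e.\ functions of the form $c(t)$ with $c\in\cP^k$ on the patch. I would write $\nabla_x\cJ v=\nabla_x\cJ(v-\bar v)$ with $\bar v(t)=\Pi^k_t(\text{mean of } v(t) \text{ over }\omega_{K_x})$, apply an inverse estimate giving $h_{K_x}^{-1}\|v-\bar v\|_{L^2}$, and bound this by a parabolic Poincaré inequality
\begin{align*}
  \|v-\bar v\|_{L^2(\omega_{K_t}\times\omega_{K_x})}\lesssim h_{K_x}\|\nabla_x v\| + h_{K_t}h_{K_x}^{-1}\|\partial_t v\|_{L^2(\omega_{K_t};H^{-1}(\omega_{K_x}))}.
\end{align*}
The second term comes from comparing spatial means at different times: the mean is tested against a bump of $H^1_0$-norm $\sim h_x^{-1}$ times an $L^2$-normalisation factor. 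Boundary patches need care, because constants on $F^{(j)}$ are also reproduced, so the same argument applies. For the time derivative, $\cJ$ reproduces functions that are constant in $t$ and lie in $\cS^\ell$ in $x$, so $\partial_t\cJ v=\partial_t\cJ(v-w)$ with $w$ the spatial interpolant of a temporal mean. An inverse estimate in time gives the factor $h_{K_t}^{-1}$, and I would measure $\cJ(v-w)$ in $H^{-1}(\Omega)$ using $\|\cdot\|_{H^{-1}}\lesssim\|h_x^2\nabla_x\cdot\|$-type duality with the local $L^2$ stability. This yields the two terms $\|\partial_t v\|_{L^2(H^{-1})}$ and $h_{K_t}^{-1}\|h_x^2\nabla_x v\|$.

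I expect the main obstacle to be the $H^{-1}(\Omega)$ estimate for the time derivative, i.e.\ bounding $\|\cJ_x(v(t)-v(s))\|_{H^{-1}(\Omega)}$ for the spatial part of the operator. A Scott--Zhang operator is not $H^{-1}$-stable, and the boundary functionals are not even defined on $H^{-1}$. I would handle this by splitting $\cJ_x$ into an $H^{-1}$-stable projection (in the spirit of \cite{Fuehrer_21,DieningST_23}) plus a remainder involving local $L^2$ quantities. The remainder is then controlled by $h_x^2\nabla_x v$ via the local Poincaré and trace inequalities, which is exactly why the statement carries that weighted term.
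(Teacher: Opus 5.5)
Your construction gives (i), (ii), the $L^2$ bound (read, as you correctly do, with $h_{K_x}\|\nabla_x v\|$) and the $\nabla_x$ bound. However, it does \emph{not} satisfy the first estimate of (iii), and the repair you propose cannot fix it. The problem is your interior spatial functionals $v\mapsto\langle v,\psi_j^*\rangle_{K_x^{(j)}}$ with discontinuous, single-element $\psi_j^*$. They are not continuous on $H^{-1}(\Omega)$, and the loss is not compensated by $h_{K_t}^{-1}\|h_x^2\nabla_x v\|$.

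Here is a counterexample. Take $d=1$, $\Omega=(0,1)$ with uniform mesh size $h$, $\ell=1$, and let node $x_j$ use $K_x^{(j)}=(x_j,x_{j+1})$ for $1\le j\le N-1$. On $K_x^{(j)}$ set $w:=\alpha_j\beta_j(\psi_j^*-c_j)$, where $\beta_j$ is the element bubble, $c_j$ makes $\int_{K_x^{(j)}}w=0$, and $\alpha_j$ makes $\langle w,\psi_j^*\rangle_{K_x^{(j)}}=1$ (possible since $\psi_j^*$ is not constant). Set $w:=0$ on $(x_0,x_1)$. Then $w\in H^1_0(\Omega)$ and $\|w\|_{L^\infty(\Omega)}\simeq 1$. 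The elementwise vanishing means give $\|w\|_{H^{-1}(\Omega)}\lesssim h$, and $\|h^2w'\|_{L^2(\Omega)}\simeq h$. Yet your spatial factor gives $\cJ_x w=\sum_{j=1}^{N-1}\psi_j$, which equals $1$ on $(x_1,x_{N-1})$, so $\|\cJ_x w\|_{H^{-1}(\Omega)}\simeq 1$.

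Now take $v(t,x)=(t-t_K)w(x)$, with $t_K$ the midpoint of $K_t$. Your $\cJ$ is a tensor product that reproduces linear functions in time, so $\partial_t\cJ v=\cJ_x w$. The left-hand side of the first estimate is therefore $\simeq h_{K_t}^{1/2}$, while its right-hand side is $\lesssim h_{K_t}^{1/2}h$, however $h_{K_t}$ and $h$ are related. Replacing $t-t_K$ by an amplitude-one zigzag in $\cS^1(\cT_t)$ and $w$ by $hw$ even gives $\|v\|_{L^2(J;H^1(\Omega))\cap H^1(J;H^{-1}(\Omega))}\lesssim 1$ but $\|\partial_t\cJ v\|_{L^2(J;H^{-1}(\Omega))}\simeq h^{-1}$ under parabolic scaling. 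So the global bound of Remark~\ref{rem1} also fails.

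The same $w$ shows that in your split, the remainder ``your interior part minus an $H^{-1}$-stable projection'' has $H^{-1}(\Omega)$-norm $\simeq 1$ while $\|h_x^2\nabla_x w\|\simeq h$. There is no bound of the type $\|\cdot\|_{H^{-1}(\Omega)}\lesssim\|h_x^2\nabla_x\cdot\|$ in the interior, because $H^1_0(\Omega)$ test functions satisfy no Friedrichs inequality with a factor $h_x$ away from $\Gamma$.

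The missing idea, which is what the paper does, is to build the interior part from the outset with $H^1_0$-conforming dual functions. The paper uses the modified dual basis $\psi_{x,j}\in\cS^{3\ell}_0(\cT_x)$, biorthogonal to the Bernstein basis and supported on basis patches. Then $\cI_{\cS^\ell_0(\cT_x)}$ is an $H^{-1}(\Omega)$-stable projection, and the interior part $\cI_{\cS^k(\cT_t)}\circ\cI_{\cS^\ell_0(\cT_x)}$ satisfies $\|\partial_t\cdot\|_{L^2(K_t;H^{-1}(\Omega))}\lesssim\|\partial_t v\|_{L^2(\omega_{K_t};H^{-1}(\Omega))}$ with no extra term.

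Only the boundary part $\cJ_{\cS_\Sigma^{k,\ell}(\cT_{t,x})}$ is estimated by hand, and there the extra power of $h_x$ you were looking for is available. Its range lives on elements with $\overline{K_x}\cap\Gamma\neq\emptyset$, where every $w\in H^1_0(\Omega)$ satisfies $\|w\|_{L^2(K_x)}\lesssim h_{K_x}\|\nabla_x w\|_{L^2(\omega_{K_x})}$. Combine this with:
\begin{itemize}
\item the identity $\partial_t\cJ_{\cS_\Sigma^{k,\ell}(\cT_{t,x})}c=0$ for constants $c$;
\item the factor $h_{K_x}^{1/2}$ from the boundary basis and dual-basis scalings;
\item the inverse estimate in time;
\item a weighted trace inequality;
\item the parabolic Poincar\'e inequality.
\end{itemize}
Together these give exactly $\|\partial_t v\|_{L^2(\omega_{K_t};H^{-1}(\Omega))}+h_{K_t}^{-1}\|h_x^2\nabla_x v\|$. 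If you replace your interior spatial functionals this way, the rest of your argument goes through; your facet functionals on $\Gamma$ and element functionals in time can stay.
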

\begin{remark}\label{rem1}
  Considering uniform meshes $\cT_t$ and $\cT_x$, the equal scaling $h_x\eqsim h_t$ already implies that $h_x^2\lesssim h_t$, which in turn
  yields $h_{t}^{-1} h_x^{2} \lesssim 1$.
  On the other hand, $h_t\lesssim h_x^2$ is needed to ensure
  $h_{t}h_{x}^{-2}\lesssim 1$. Eventually, the parabolic scaling $h_t\eqsim h_x^2$ gives robust
  stability estimates in the preceding theorem. For later reference, we mention in particular that parabolic scaling $h_t\simeq h_x^2$ gives
  the global stability estimate
  \begin{align*}
    \| \cJ_{\cS^{k,\ell}(\cT_{t,x})} v \|_{L^2(J;H^1(\Omega))\cap H^1(J;H^{-1}(\Omega))}
    \lesssim 
    \| v \|_{L^2(J;H^1(\Omega))\cap H^1(J;H^{-1}(\Omega))}
  \end{align*}
  by summing the stability estimates of Theorem~\ref{thm:main}.
  \qed
\end{remark}
\subsection{Application to inhomogeneous boundary conditions in space-time discretizations}\label{sec:appl}
In this section, we show how to apply Theorem~\ref{thm:main} in the context of space-time first-order system least squares (FOSLS)
finite elements for parabolic equations, employing notation as in Lemma~\ref{lemma:ihom}.
We consider the parabolic initial/boundary value problem to find $u$ such that
\begin{equation}\label{eq:par}
\begin{aligned}
    \partial_t u - \div_x\;\fA\nabla_x u + \fb\cdot\nabla_x u + cu &= f + \div_x\;\fg &&\text{ on } Q,\\
    u &= u_D &&\text{ on } \Sigma,\\
    u &= u_0. &&\text{ on } \left\{ 0 \right\}\times\Omega,
\end{aligned}
\end{equation}
where $\fA=\fA^\top\in L^\infty(Q)^{d\times d}$ uniformly positive definite, $\fb\in L^\infty(Q)^d$, $c\in L^\infty(Q)$,
and $f\in L^2(Q)$, $\fg\in L^2(Q)^d$, $u_D\in H^{1/2,1/4}(\Sigma)$, and $u_0\in L^2(\Omega)$.
Following~\cite{FuehrerK_21,GantnerS_21}, we set
\begin{align*}
  X := \left\{ \fu=(u_1,\fu_2)\in L^2(J;H^1(\Omega))\times L^2(Q)^{d} \mid \div\;\fu \in L^2(Q) \right\}
\end{align*}
with graph norm
\begin{align*}
  \| \fu \|_X^2 := \| u_1 \|_{L^2(J;H^1(\Omega))}^2 + \| \fu_2 \|_{L^2(Q)^d}^2 + \| \div\;\fu \|_{L^2(Q)}^2.
\end{align*}
Here, $\div\,(u_1,\fu_2)= \partial_t u_1 + \div_x\,\fu_2$ is the time-space diverence operator.
Then, $X$ is a Hilbert space. Set
\begin{align*}
  Y_1 &:= L^2(Q)^d\times L^2(Q) \times L^2(\Omega),\\
  Y_2 &:= H^{1/2,1/4}(\Sigma).
\end{align*}
Define $F:X\to Y_1$, $T:X\to Y_2$ by
\begin{align*}
  F \uu &= F(u_1,\fu_2) :=( \fu_2+\fA\nabla_x u_1, \div\;\uu-\fb\fA^{-1}\fu_2 + cu_1, u_1(0) ),\\
  T \uu &:= \trace_\Sigma u_1.
\end{align*}
The above objects fulfill the conditions of Lemma~\ref{lemma:ihom} (i), cf.~\cite[Thm.~2.8]{GantnerS_21}. Hence, the equation
$G \uu = (\fg,f,u_0,u_D)$ has a unique solution, which connects to~\eqref{eq:par} via $\uu = (u,-\fA \nabla_x u)$.
Let $\cT_t$ be a partition of $J$ into intervals and $\cT_x$ be conforming simplicial partition of $\Omega$.
Following~\cite{GantnerS_24}, we set
\begin{align*}
  X_{h} &:= \cS^{k,\ell}(\cT_{t,x}) \times [ \cP^{k-1}(\cT_t)\otimes RT^\ell(\cT_x) ],
\end{align*}
where $\cP^{k-1}(\cT_t)$ is the space of $\cT_t$-piecewise polynomials of degree $k-1$, and
$RT^\ell(\cT_x)$ is the Raviart--Thomas space of degree $\ell$.
Then $X_{0,h} = \cS^{k,\ell}_0(\cT_{t,x}) \times [ \cP^{k-1}(\cT_t)\otimes RT^\ell(\cT_x) ]$.
The numerical method for homogeneous boundary conditions $F_{0,h}^{-1}:Y_1\to X_{0,h}$ is given by FOSLS finite elements, i.e.,
\begin{align*}
  F_{0,h}^{-1}(\fg,f,u_0) = \argmin_{\uu_h = (u_{1,h},\uu_{2,h}) \in X_{0,h}} &\| \fu_{2,h}+\fA\nabla_x u_{1,h} - \fg \|_{L^2(Q)^d}^2\\
  \qquad &+ \| \div\;\uu_h-\fb\fA^{-1}\fu_{2,h} + cu_{1,h} - f \|_{L^2(Q)}^2 + \| u_{1,h}(0) - u_0 \|_{L^2(\Omega)}^2.
\end{align*}
The corresponding approximation operator $\bG_{0,h}$ is linear and uniformly bounded, as $F$ and $F_{0,h}^{-1}$ are.
As operator $P_h$ we may take any linear projection which is bounded in $H^{1/2,1/4}(\Sigma)$. Candidates are Scott--Zhang-type
operators on the boundary, or the
$L^2(\Sigma)$-orthogonal projection onto $\cS^k(\cT_t)\otimes\cS^\ell(\cT_x^\Gamma)$. As discrete extension operator $E_h$,
we conveniently set
\begin{align*}
  E_h(\trace_\Sigma u_{1,h}) := (\wilde u_{1,h},\bf0),
\end{align*}
where $\wilde u_{1,h}$ is obtained by setting inner degrees of freedom (defined as point evaluations at Lagrange nodes) to $0$.
The above objects fulfill the conditions of Lemma~\ref{lemma:ihom} (ii).
In order to apply point (iii) of Lemma~\ref{lemma:ihom} and conclude quasi-optimality of the corresponding operator $\bG_h$,
it remains to define a linear and bounded projection
$J_h:X\to X_h$ which fulfills~\eqref{eq:cond1}. To this end, we will apply our operator $\cJ_{\cS^{k,\ell}(\cT_{t,x})}$ in a corresponding
construction already used in~\cite[Sec.~5.2]{StevensonS_23}.
\begin{lemma}\label{lem:1}
  Let $\cT_t$ be a partition of $J$ into intervals and $\cT_x$ be conforming simplicial partition of $\Omega$.
  Then, there exists a linear projection
  \begin{align*}
    \cI_{\cP^{k-1}(\cT_t)\otimes RT^\ell(\cT_x)}:L^2(Q)^d \to \cP^{k-1}(\cT_t)\otimes RT^\ell(\cT_x)
  \end{align*}
  which is bounded in $L^2(Q)^d$ such that the operator $J_h:X\to X_h$
  \begin{align*}
    J_h\fu = J_h(u_1,\fu_2) := \left( \cJ_{\cS^{k,\ell}(\cT_{t,x})}u_1,
    \cI_{\cP^{k-1}(\cT_t)\otimes RT^\ell(\cT_x)}
    \bigl( 
      \fu_2-\nabla_x(-\Delta_x)^{-1}\partial_t(u_1-\cJ_{\cS^{k,\ell}(\cT_{t,x})}u_1)
    \bigr)
\right)
  \end{align*}
  fulfills
  \begin{align*}
    \div\, J_h\fu
    = \Pi_{\cP^{k-1}(\cT_{t})\otimes\cP^\ell(\cT_x)}\div\,\fu,
  \end{align*}
  where $\Pi_{\cP^{k-1}(\cT_{t})\otimes\cP^\ell(\cT_x)}:L^2(Q)\to \cP^{k-1}(\cT_{t})\otimes\cP^\ell(\cT_x)$ is the $L^2(Q)$-orthogonal projection.
\end{lemma}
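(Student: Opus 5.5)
The plan is to build $\cI:=\cI_{\cP^{k-1}(\cT_t)\otimes RT^\ell(\cT_x)}$ as a tensor product of a temporal and a spatial projection, each with a commuting property. In time, let $\Pi_t:=\Pi_{\cP^{k-1}(\cT_t)}$ be the $L^2(J)$-orthogonal projection. For $w\in H^1(J)$, integration by parts on each $K_t$ against $q\in\cP^{k-1}(K_t)$ gives
\begin{align*}
  \partial_t I_t w = \Pi_t\partial_t w ,
\end{align*}
where $I_t$ is the standard interpolation onto $\cS^k(\cT_t)$ that keeps the nodal values and the interior moments against $\cP^{k-2}(K_t)$.

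In space, I would take an $L^2(\Omega)^d$-bounded commuting projection $\Pi_x^{RT}$ onto $RT^\ell(\cT_x)$ with
\begin{align*}
  \div_x \Pi_x^{RT}\fu = \Pi_{\cP^\ell(\cT_x)}\div_x\fu \qquad\text{for } \fu\in H(\div_x;\Omega).
\end{align*}
Such projections exist: the Ern--Gudi--Smears--Vohral\'ik or Falk--Winther type cochain projections, or the locally defined operator used in~\cite[Sec.~5.2]{StevensonS_23}. Then set $\cI:=\Pi_t\otimes\Pi_x^{RT}$. This is a projection onto $\cP^{k-1}(\cT_t)\otimes RT^\ell(\cT_x)$ and is bounded in $L^2(Q)^d$, since both factors are $L^2$-bounded and $L^2(Q)=L^2(J)\otimes L^2(\Omega)$.

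Next comes the divergence identity. Write $u_{1,h}:=\cJ_{\cS^{k,\ell}(\cT_{t,x})}u_1$ and $e:=u_1-u_{1,h}$. Also put $\fu_2^*:=\fu_2-\nabla_x(-\Delta_x)^{-1}\partial_t e$, where $(-\Delta_x)^{-1}:H^{-1}(\Omega)\to H^1_0(\Omega)$ acts pointwise in time. By Theorem~\ref{thm:main}, $\partial_t e\in L^2(J;H^{-1}(\Omega))$, so $\fu_2^*\in L^2(Q)^d$. The point of the correction is that $\div_x\nabla_x(-\Delta_x)^{-1}\partial_t e=-\partial_t e$, hence
\begin{align*}
  \div_x\fu_2^* = \div_x\fu_2+\partial_t e = \div\,\fu-\partial_t u_{1,h}\in L^2(Q).
\end{align*}
Here we used $\partial_t u_1+\div_x\fu_2=\div\,\fu\in L^2(Q)$ and $\partial_tu_{1,h}\in L^2(Q)$. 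Applying the commuting properties gives
\begin{align*}
  \div_x\cI\fu_2^* = (\Pi_t\otimes\Pi_{\cP^\ell(\cT_x)})\div_x\fu_2^* = \Pi_{\cP^{k-1}\otimes\cP^\ell}\div\,\fu - \Pi_{\cP^{k-1}\otimes\cP^\ell}\partial_t u_{1,h}.
\end{align*}
Since $u_{1,h}\in\cS^k(\cT_t)\otimes\cS^\ell(\cT_x)$, we have $\partial_t u_{1,h}\in\cP^{k-1}(\cT_t)\otimes\cP^\ell(\cT_x)$, so the last projection acts as the identity. Therefore
\begin{align*}
  \div\,J_h\fu=\partial_tu_{1,h}+\div_x\cI\fu_2^*=\Pi_{\cP^{k-1}(\cT_{t})\otimes\cP^\ell(\cT_x)}\div\,\fu .
\end{align*}

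The main obstacle is justifying the tensor commuting identity $\div_x(\Pi_t\otimes\Pi_x^{RT})\fv=(\Pi_t\otimes\Pi_{\cP^\ell})\div_x\fv$ for $\fv\in L^2(J;H(\div_x;\Omega))$. This holds because $\Pi_t$ acts only in time and commutes with $\div_x$, while $\Pi_x^{RT}$ acts pointwise in time. To make this rigorous I would check the identity first on finite sums of products $\phi(t)\boldsymbol{\psi}(x)$ and then extend by density. The extension uses the fact that both sides are bounded from the graph space $L^2(J;H(\div_x))$ into $L^2(Q)$, and $\Pi_x^{RT}$ is bounded even on $L^2$.

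A secondary point is the measurability of $t\mapsto(-\Delta_x)^{-1}\partial_te(t)$, which holds because it is the image of a Bochner function under a bounded operator. Finally, the projection property of $J_h$ on $X_h$ follows because $e=0$ and $\cI$ reproduces discrete fields.
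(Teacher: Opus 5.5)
The paper gives no proof of this lemma. It only points to the construction of \cite[Sec.~5.2]{StevensonS_23}, and your construction is that one: $\cI=\Pi_t\otimes\Pi_x^{RT}$, with the correction $\nabla_x(-\Delta_x)^{-1}\partial_t(u_1-\cJ_{\cS^{k,\ell}(\cT_{t,x})}u_1)$ turning $\div_x\fu_2^*$ into $\div\,\fu-\partial_t u_{1,h}$. That computation is correct. Two small points: the temporal interpolant $I_t$ is never used, and $\partial_t u_1\in L^2(J;H^{-1}(\Omega))$ comes from $\fu\in X$, not from Theorem~\ref{thm:main}.

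The gap is the spatial ingredient. You need a projection onto $RT^\ell(\cT_x)$ that is bounded on $L^2(\Omega)^d$ and satisfies $\div_x\Pi_x^{RT}\boldsymbol{v}=\Pi_{\cP^\ell(\cT_x)}\div_x\boldsymbol{v}$ on $H(\div;\Omega)$, with $\Pi_{\cP^\ell(\cT_x)}$ the $L^2$-orthogonal projection. No such projection exists. Since $\div_x$ is bounded on the finite-dimensional space $RT^\ell(\cT_x)$, such an operator would give $\|\Pi_{\cP^\ell(\cT_x)}\div_x\boldsymbol{v}\|_{L^2(\Omega)}\le C_h\|\boldsymbol{v}\|_{L^2(\Omega)}$ for all $\boldsymbol{v}\in H(\div;\Omega)$. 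On the other hand, $\|\Pi_{\cP^\ell(\cT_x)}\div_x\boldsymbol{v}\|_{L^2(K_x)}\ge|K_x|^{-1/2}\bigl|\int_{\partial K_x}\boldsymbol{v}\cdot\n\bigr|$. A smooth field concentrated in an $\varepsilon$-layer around part of one face of $K_x$ keeps this flux equal to $1$ while $\|\boldsymbol{v}\|_{L^2(\Omega)}=O(\varepsilon^{1/2})$, which is a contradiction.

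The operators you cite do not escape this. The Ern--Gudi--Smears--Vohral\'{\i}k and Falk--Winther projectors are defined and stable only on $H(\div;\Omega)$, with local bounds of the form $\|\boldsymbol{v}\|_{L^2(\omega_{K_x})}+h_{K_x}\|\div_x\boldsymbol{v}\|_{L^2(\omega_{K_x})}$. The genuinely $L^2$-bounded smoothed projections (Christiansen--Winther, Ern--Guermond) commute with a smoothed projection onto $\cP^\ell(\cT_x)$, which by the argument above cannot be the orthogonal one.

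Taking $u_1=0$ and $\fu_2$ constant in time with such spatial profiles shows that the same obstruction applies to the lemma itself. $L^2(Q)^d$-boundedness of $\cI$ is incompatible with $\div\,J_h\fu=\Pi_{\cP^{k-1}(\cT_t)\otimes\cP^\ell(\cT_x)}\div\,\fu$, so the statement cannot be proved as written. Your divergence computation does go through unchanged for either of two corrected versions, and each suffices for the Corollary.

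In version (a), take $\Pi_x^{RT}$ of Ern--Gudi--Smears--Vohral\'{\i}k type. Then $\cI$ is stable only in $L^2(J;H(\div_x;\Omega))$ with an $h_x$-weighted divergence term. In the Corollary this produces the extra term $\|h_x\partial_t u_{1,h}\|_{L^2(Q)}$. A spatial inverse estimate bounds it by $\|\partial_t u_{1,h}\|_{L^2(J;H^{-1}(\Omega))}$, which Theorem~\ref{thm:main} controls under parabolic scaling.

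In version (b), keep an $L^2$-bounded smoothed commuting projection, with partner $\pi_x$ onto $\cP^\ell(\cT_x)$. Your computation then gives $\div\,J_h\fu=(\Pi_t\otimes\pi_x)\div\,\fu$. Here $\Pi_t\otimes\pi_x$ is a uniformly $L^2(Q)$-bounded, non-orthogonal projection that still reproduces $\partial_t u_{1,h}$, and that is all the Corollary uses.
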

\begin{cor}
  Let $\cT_t$ be a partition of $J$ into intervals and $\cT_x$ be conforming simplicial partition of $\Omega$.
  Suppose the parabolic scaling $h_t\eqsim h_x^2$. Then, the
  linear operator $J_h:X\to X_h$ from Lemma~\ref{lem:1} fulfills the conditions of Lemma~\ref{lemma:ihom} (iii). That is,
  $J_h$ is a uniformly bounded projection, and
  \begin{align*}
    \forall v \in X:\quad T v \in \ran T|_{X_h} \quad\Longrightarrow\quad T J_h v = Tv.
  \end{align*}
  Consequently, the operator $\bG_h$ is quasi-optimal.
\end{cor}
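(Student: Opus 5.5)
We verify the three requirements of Lemma~\ref{lemma:ihom}~(iii) in turn: $J_h$ is a projection, it is uniformly bounded under parabolic scaling, and it reproduces discrete traces.

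\emph{Projection property.} Let $\uu_h=(u_{1,h},\uu_{2,h})\in X_h$. By Theorem~\ref{thm:main}~(i), $\cJ_{\cS^{k,\ell}(\cT_{t,x})}u_{1,h}=u_{1,h}$. Hence the correction term $\nabla_x(-\Delta_x)^{-1}\partial_t(u_{1,h}-\cJ_{\cS^{k,\ell}(\cT_{t,x})}u_{1,h})$ vanishes. Since $\cI_{\cP^{k-1}(\cT_t)\otimes RT^\ell(\cT_x)}$ is a projection, the second component returns $\uu_{2,h}$. Thus $J_h\uu_h=\uu_h$.

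\emph{Trace reproduction.} $T J_h\uu=\trace_\Sigma\cJ_{\cS^{k,\ell}(\cT_{t,x})}u_1$, and $\ran T|_{X_h}=\ran\trace_\Sigma|_{\cS^{k,\ell}(\cT_{t,x})}$. The implication $Tv\in\ran T|_{X_h}\Rightarrow TJ_hv=Tv$ is therefore exactly Theorem~\ref{thm:main}~(ii).

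\emph{Uniform boundedness.} This is the main step. Write $w:=u_1-\cJ_{\cS^{k,\ell}(\cT_{t,x})}u_1$.
\begin{enumerate}
\item[1.] For the first component, Remark~\ref{rem1} under parabolic scaling gives
\begin{align*}
\|\cJ_{\cS^{k,\ell}(\cT_{t,x})}u_1\|_{L^2(J;H^1(\Omega))\cap H^1(J;H^{-1}(\Omega))}\lesssim \|u_1\|_{L^2(J;H^1(\Omega))\cap H^1(J;H^{-1}(\Omega))}.
\end{align*}
This requires $u_1\in H^1(J;H^{-1}(\Omega))$ with $\|\partial_t u_1\|_{L^2(J;H^{-1}(\Omega))}\lesssim\|\uu\|_X$. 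To see this, test $\partial_t u_1=\div\,\uu-\div_x\fu_2$ against $\phi\in H^1_0(\Omega)$ pointwise in time, using $\langle\div_x\fu_2,\phi\rangle=-(\fu_2,\nabla_x\phi)$. This gives $\|\partial_tu_1\|_{L^2(H^{-1})}\le\|\div\,\uu\|_{L^2(Q)}+\|\fu_2\|_{L^2(Q)}$, with the time derivative understood distributionally.
\item[2.] For the correction term, $\nabla_x(-\Delta_x)^{-1}:H^{-1}(\Omega)\to L^2(\Omega)^d$, with $(-\Delta_x)^{-1}$ the Dirichlet inverse, is an isometry up to norm convention. Hence its $L^2(Q)$ norm is bounded by $\|\partial_t w\|_{L^2(J;H^{-1}(\Omega))}\lesssim\|\uu\|_X$, again by the global stability from step~1.
\item[3.] $L^2$-boundedness of $\cI_{\cP^{k-1}(\cT_t)\otimes RT^\ell(\cT_x)}$ then controls the $L^2(Q)^d$ norm of the second component.
\item[4.] For the divergence, Lemma~\ref{lem:1} gives $\div\,J_h\uu=\Pi_{\cP^{k-1}(\cT_{t})\otimes\cP^\ell(\cT_x)}\div\,\uu$, which is bounded in $L^2(Q)$ by $\|\div\,\uu\|_{L^2(Q)}$.
\end{enumerate}
Adding these bounds gives $\|J_h\uu\|_X\lesssim\|\uu\|_X$ uniformly in the mesh.

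The only delicate points are using parabolic scaling to obtain global stability of $\cJ_{\cS^{k,\ell}(\cT_{t,x})}$ in the full energy space, and the embedding of $X$'s first component into $H^1(J;H^{-1}(\Omega))$ from step~1. With all three conditions of Lemma~\ref{lemma:ihom}~(iii) established, quasi-optimality of $\bG_h$ follows.
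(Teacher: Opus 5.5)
Your proposal is correct and follows the paper's proof essentially step by step. The projection property comes from the projection properties of the two component operators, and trace reproduction comes directly from Theorem~\ref{thm:main}~(ii). Uniform boundedness combines the global stability of Remark~\ref{rem1} under parabolic scaling, the $H^{-1}\to L^2$ boundedness of $\nabla_x(-\Delta_x)^{-1}$, the $L^2$-stability of the Raviart--Thomas interpolant, the divergence identity of Lemma~\ref{lem:1}, and the bound $\|\partial_t u_1\|_{L^2(J;H^{-1}(\Omega))}\lesssim\|\div\,\uu\|_{L^2(Q)}+\|\fu_2\|_{L^2(Q)^d}$. Your explicit remark that the correction term vanishes on $X_h$ is just a slightly more detailed version of the paper's projection argument.
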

\begin{proof}
  As $\cJ_{\cS^{k,\ell}(\cT_{t,x})}$ is a projection onto $\cS^{k,\ell}(\cT_x)$ and $\cI_{\cP^{k-1}(\cT_t)\otimes RT^\ell(\cT_x)}$ is a projection
  onto $\cP^{k-1}(\cT_t)\otimes RT^\ell(\cT_x)$, it follows that $J_h$ is a projection onto $X_h$. Furthermore, suppose that
  $T(u_1,\fu_2)=\trace_\Sigma u_1 \in \ran \trace_\Sigma|_{\cS^{k,\ell}(\cT_{t,x})}$.
  Then, $T J_h(u_1,\fu_2) = \trace_\Sigma\cJ_{\cS^{k,\ell}(\cT_{t,x})}u_1 = \trace_\Sigma u_1 =T(u_1,\fu_2)$.
  Finally, using the parabolic scaling, we estimate with Remark~\ref{rem1}, Lemma~\ref{lem:1}, and obvious bounds,
  \begin{align*}
    \| J_h(u_1,\fu_2) \|_X &\lesssim \| \cJ_{\cS^{k,\ell}(\cT_{t,x})}u_1 \|_{L^2(J;H^{1}(\Omega))}\\
    &\qquad + \| \cI_{\cP^{k-1}(\cT_t)\otimes RT^\ell(\cT_x)}\bigl(\fu_2-\nabla_x(-\Delta_x)^{-1}\partial_t(u_1-\cJ_{\cS^{k,\ell}(\cT_{t,x})}u_1)\bigr) \|_{L^2(Q)^d}\\
    &\qquad + \| \div\, J_h(u_1,\fu_2) \|_{L^2(Q)}\\
    &\lesssim \| u_1 \|_{L^2(J;H^{1}(\Omega))\cap H^1(J;H^{-1}(\Omega))}\\
    &\qquad + \| \fu_2 \|_{L^2(Q)^d} + \| \nabla_x(-\Delta_x)^{-1}\partial_t(u_1-\cJ_{\cS^{k,\ell}(\cT_{t,x})}u_1) \|_{L^2(Q)^d}\\
    &\qquad + \| \div\, (u_1,\fu_2) \|_{L^2(Q)}\\
    &\lesssim \| u_1 \|_{L^2(J;H^{1}(\Omega))\cap H^1(J;H^{-1}(\Omega))} + \| \fu_2 \|_{L^2(Q)^d} + \| \div\, (u_1,\fu_2) \|_{L^2(Q)},
  \end{align*}
  as well as
  \begin{align*}
    \| \partial_tu_1 \|_{L^2(J;H^{-1}(\Omega))} &\leq \| \div\, (u_1,\fu_2) \|_{L^2(J;H^{-1}(\Omega))} + \| \div_x\,\fu_2 \|_{L^2(J;H^{-1}(\Omega))}\\
    &\lesssim \| \div\, (u_1,\fu_2) \|_{L^2(Q)} + \| \fu_2 \|_{L^2(Q)^d}.
  \end{align*}
  We conclude that $J_h$ is uniformly bounded in $X$.
  Hence, Lemma~\ref{lemma:ihom} applies and yields quasi-optimality of $\bG_h$.
\end{proof}
\subsection{Isotropic Scott--Zhang operators}\label{sec:isoSZ}
We will collect the necessary ingredients for the construction put forward in~\cite{DieningST_23}.
For a conforming simplicial partition $\cT$ of $D\in\left\{ \Omega,\Gamma \right\}$ and Lagrange nodes $\cN^p(\cT)$ of $\cS^{p}(\cT)$,
let $(b_j)_{j\in\cN^p(\cT)}$ be the associated Bernstein basis, cf.~\cite{AinsworthAD_11}.
The support $\omega_j := \supp(b_j)$ consist of those elements $K\in\cT$ which share the Lagrange node $j$.
We will refer to $\omega_j$ as \textit{basis patch}.
According to~\cite[Prop.~2.14]{DieningST_23}, there exists a modified dual basis $(\psi_j)_{j\in\cN^p(\cT)}$
with $\psi_j\in\cS^{3p}_0(\cT)$ which fulfills
\begin{enumerate}
  \item[(i)] $\supp(\psi_j) \subseteq \omega_j = \supp(b_j)$,
  \item[(ii)] $\dual{b_j}{\psi_k}_D = \delta_{j,k}$,
  \item[(iii)] $\| b_j \|_{L^2(\omega_j)} \| \psi_j \|_{L^2(\omega_j)}\leq C_p$,
\end{enumerate}
where $C_p>0$ is a constant which depends only on $p$ and on the shape-regularity (or local quasi-uniformity if $d=1$) of $\cT$.
In particular, it holds
\begin{align}\label{eq:basis}
  \| b_{j} \|_{L^2(\omega_j)} \lesssim \diam(\omega_j)^{d/2} \qquad \text{ and } \qquad
  \| \psi_{j} \|_{L^2(\omega_j)} \lesssim \diam(\omega_j)^{-d/2},
\end{align}
where the first inequality is due to the definition of the Bernstein basis, and the second one is a consequence o
property $\rm{(iii)}$.
Above, $\dual{\cdot}{\cdot}_D$ is the $L^2(D)$-inner product, extended
to $H^{-1}(D)\times H^{1}_0(D)$.
If $D$ has a non-empty
boundary $\partial D$, we write $\cN^p(\cT) = \cN^p_0(\cT)\cup \cN^p_{\partial D}(\cT)$, where $\cN^p_0(\cT)$ denote the inner Lagrange nodes,
and $\cN^p_{\partial D}(\cT)$ the Lagrange nodes on the boundary $\partial D$.
The linear operators
\begin{align*}
  \cI_{\cS^p(\cT)}v &:= \sum_{j\in\cN^p(\cT)} \dual{v}{\psi_j}_D b_j,\\
  \cI_{\cS^p_0(\cT)}v &:= \sum_{j\in\cN^p_0(\cT)} \dual{v}{\psi_j}_D b_j,
\end{align*}
defined for $v\in H^{-1}(D)$, have the following properties, cf.~\cite[Thm.~2.1]{DieningST_23}.
\begin{theorem}\label{thm:dst}
  The linear operators $\cI_{\cS^p(\cT)}:H^{-1}(D)\to\cS^p(\cT)$ and
  $\cI_{\cS^p_0(\cT)}:H^{-1}(D)\to\cS^p_0(\cT)$ are projections onto $\cS^p(\cT)$ and $\cS^p_0(\cT)$, respectively.
  Furthermore, for $m=0,1$ and $K\in\cT$,
  \begin{align*}
    | \cI_{\cS^p(\cT)} u |_{H^m(K)} &\lesssim | u |_{H^m(\omega_K)} \qquad \text{ for all } u\in H^1(D),\\
    | \cI_{\cS^p_0(\cT)} u |_{H^m(K)} &\lesssim | u |_{H^m(\omega_K)} \qquad \text{ for all } u\in H^1_0(D),
  \end{align*}
  as well as
  \begin{align*}
    \| \cI_{\cS^p(\cT)} u \|_{H^{-1}(D)} &\lesssim \| u \|_{H^{-1}(D)} \qquad \text{ for all } u\in H^{-1}(D),\\
    \| \cI_{\cS^p_0(\cT)} u \|_{H^{-1}(D)} &\lesssim \| u \|_{H^{-1}(D)} \qquad \text{ for all } u\in H^{-1}(D).
  \end{align*}
  Hidden constants depend only on $p$ and shape-regularity (or local quasi-uniformity if $d=1$) of $\cT$.
\end{theorem}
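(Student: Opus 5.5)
The plan is to verify the claims directly from the three properties (i)--(iii) of the modified dual basis and the scaling bounds~\eqref{eq:basis}. Each of the three parts (projection, local $H^m$-bounds, $H^{-1}$-bound) gets its own argument.

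\emph{Projection property.} For $v=\sum_{k\in\cN^p(\cT)} c_k b_k\in\cS^p(\cT)$, the biorthogonality (ii) gives $\dual{v}{\psi_j}_D=c_j$, hence $\cI_{\cS^p(\cT)}v=v$. If $v\in\cS^p_0(\cT)$, then $c_k=0$ for all $k\in\cN^p_{\partial D}(\cT)$, since Bernstein coefficients at boundary nodes are determined by the trace. Thus $v=\sum_{k\in\cN^p_0(\cT)}c_kb_k$ and again $\cI_{\cS^p_0(\cT)}v=v$. Both operators are well defined on $H^{-1}(D)$ because $\psi_j\in\cS^{3p}_0(\cT)\subset H^1_0(D)$.

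\emph{Local $H^m$-stability, $m=0,1$.}
\begin{itemize}
\item For $m=0$, Cauchy--Schwarz together with (i) and (iii) gives $\|\dual{u}{\psi_j}_D b_j\|_{L^2(K)}\le C_p\|u\|_{L^2(\omega_j)}$. Only the boundedly many $j$ with $K\subseteq\omega_j$ contribute on $K$, and $\omega_j\subseteq\omega_K$, which yields the claim.
\item For $m=1$ and $\cI_{\cS^p(\cT)}$: constants lie in $\cS^p(\cT)$ and are reproduced. So on $K$ we may write $\nabla\cI u=\nabla\cI(u-c)$ for any $c\in\bR$. An inverse estimate for the finitely many $b_j$ on $K$, followed by the $m=0$ bound, gives $\|\nabla\cI u\|_{L^2(K)}\lesssim h_K^{-1}\|u-c\|_{L^2(\omega_K)}$. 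Choosing $c$ as the mean of $u$ on $\omega_K$ and applying the Poincar\'e inequality on the (connected, shape-regular) patch closes the estimate.
\item For $\cI_{\cS^p_0(\cT)}$ with $\overline K\cap\partial D=\emptyset$, the same argument works, since then all nodes $j$ with $K\subseteq\omega_j$ are interior and $\cI_{\cS^p_0(\cT)}c=c$ on $K$.
\item If $\overline K$ touches $\partial D$, even if only at a vertex $z$, constants are not reproduced. Instead we use $c=0$ and a Friedrichs inequality $\|u\|_{L^2(\omega_K)}\lesssim h_K|u|_{H^1(\omega_K)}$. This is valid because $\omega_K$ contains all elements around $z$, and one of them has a full facet on $\partial D$ on which $u\in H^1_0(D)$ vanishes. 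The constant depends only on shape-regularity.
\end{itemize}

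\emph{$H^{-1}$-stability.} This goes by duality. For $w\in H^1_0(D)$ we have
\[
  \dual{\cI u}{w}_D=\Big\langle u,\sum_j\dual{w}{b_j}_D\psi_j\Big\rangle_D=:\dual{u}{\cI^*w}_D,
\]
where $\cI^*w\in H^1_0(D)$ and the sum runs over $\cN^p(\cT)$ or $\cN^p_0(\cT)$, respectively. It therefore suffices to show $|\cI^*w|_{H^1(D)}\lesssim|w|_{H^1(D)}$, which reduces to local bounds. On $K$, the inverse estimate $\|\nabla\psi_j\|_{L^2}\lesssim h_K^{-1-d/2}$ together with $|\dual{w}{b_j}_D|\lesssim h_K^{d/2}\|w\|_{L^2(\omega_j)}$ gives $\|\nabla\cI^*w\|_{L^2(K)}\lesssim h_K^{-1}\|w\|_{L^2(\omega_K)}$. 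Near $\partial D$ this is closed by the Friedrichs inequality as above, since $w\in H^1_0(D)$.

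\emph{Main obstacle.} The hard case is elements away from the boundary. There one needs $\cI^*$ to annihilate the gradient of constants locally, i.e.
\[
  \sum_j\dual{1}{b_j}_D\,\nabla\psi_j=0 \quad\text{on } K.
\]
This would let us replace $w$ by $w-c$ and conclude by Poincar\'e. I expect exactly this to be the purpose of the \emph{modification} of the dual basis in~\cite[Prop.~2.14]{DieningST_23}: the $\psi_j$ are chosen so that $\sum_j\dual{1}{b_j}_D\psi_j$ is locally constant on interior elements. I would first try to verify this property from that construction. If it fails, the fallback is a summed (non-local) Hardy/Poincar\'e-type argument over patches, using $\sum_j\dual{1}{b_j}_D\,b_j$-type partition-of-unity identities. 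Summing the local estimates over $K$ with finite overlap of the patches $\omega_K$ then gives the global $H^{-1}$-bound.
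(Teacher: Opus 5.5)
Your arguments for the projection property and for the local $H^m$-bounds, $m=0,1$, are correct. They are the standard Scott--Zhang arguments: reproduction of constants, an inverse estimate and Poincar\'e on $\omega_K$, and Friedrichs on $\omega_K$ for $\cI_{\cS^p_0(\cT)}$ at elements touching $\partial D$. The paper gives no proof of this theorem and simply cites \cite[Thm.~2.1]{DieningST_23}.

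The genuine gap is the $H^{-1}$-bound, which is the real content of the cited result. Your duality reduction to $|\cI^*w|_{H^1(D)}\lesssim|w|_{H^1(D)}$ for $w\in H^1_0(D)$ is fine, and so is the Friedrichs argument near $\partial D$. On elements with $\overline K\cap\partial D=\emptyset$, however, everything rests on the identity $\sum_j\dual{1}{b_j}_D\nabla\psi_j=0$ on $K$, which you only conjecture. This identity does \emph{not} follow from (i)--(iii). In fact the $H^{-1}$-bound is false for some dual bases satisfying only (i)--(iii). So neither a derivation from the listed properties, as you announce, nor your fallback summed Hardy/Poincar\'e argument can succeed.

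\textbf{Counterexample.} Take $D=(0,1)$, a uniform mesh of width $h$ with nodes $z_j=jh$, and $p=1$.
\begin{enumerate}
\item Let $\tilde\beta\neq0$ be a continuous piecewise cubic on the reference mesh $\{-1,0,1\}$ that vanishes at $\pm1$ and is $L^2$-orthogonal to the three hat functions of that mesh. Such functions form a space of dimension at least two.
\item A short computation shows that $G:=\sum_{k\in\mathbb{Z}}\tilde\beta(\cdot-k)$ has zero mean on $(0,1)$ and is not identically zero, hence is not constant.
\item Replace $\psi_j$ by $\psi_j+h^{-1}\tilde\beta((\cdot-z_j)/h)$ for all $z_j\in(1/4,3/4)$. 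This keeps $\psi_j\in\cS^3_0(\cT)$ and preserves (i)--(iii), up to the value of $C_p$.
\item Fix $w\in H^1_0(D)$ with $w=1$ on $(1/8,7/8)$. Then $\dual{w}{b_j}_D=h$ for all perturbed $j$, so $\cI^*w$ changes on $(1/4+h,3/4-h)$ by exactly $G(\cdot/h)$. Its $H^1$-seminorm there is $\simeq h^{-1}$, while $|w|_{H^1(D)}$ is fixed.
\end{enumerate}
Hence, for at least one of the two dual bases,
\[
\|\cI\|_{H^{-1}(D)\to H^{-1}(D)}=\|\cI^*\|_{H^1_0(D)\to H^1_0(D)}\gtrsim h^{-1}.
\]

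\textbf{What is missing.} You need an additional structural property of the specific modified dual basis of \cite[Prop.~2.14]{DieningST_23} that is not recorded among (i)--(iii). It must be of the type you guessed: $\cI^*$ reproduces constants away from $\partial D$, for instance
\[
\sum_j\dual{1}{b_j}_D\psi_j=1\quad\text{on every } K \text{ with } \overline K\cap\partial D=\emptyset.
\]
You must either extract such a property from that construction or build the $\psi_j$ so that it holds. Once you have it, your local argument closes the $H^{-1}$-bound: subtract a constant, use the inverse estimate for $\psi_j$, apply Poincar\'e on $\omega_K$, and sum with finite overlap.
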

We will also need the following inverse estimate.
\begin{lemma}\label{lem:invest}
  Let $\omega_j\subseteq\cT$ be a basis patch. Then, for every constant $c\in\bR$ it holds that
  \begin{align*}
    \| c \|_{L^2(\omega_j)} \lesssim h_{\omega_j}^{-1} \| c \|_{H^{-1}(\omega_j)},
  \end{align*}
  and the hidden constant depends only on shape-regularity of $\cT$.
\end{lemma}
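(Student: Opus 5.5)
The statement to prove is Lemma~\ref{lem:invest}: for a constant $c$ on a basis patch $\omega_j$, $\|c\|_{L^2(\omega_j)}\lesssim h_{\omega_j}^{-1}\|c\|_{H^{-1}(\omega_j)}$. I would argue by duality and a scaling argument. Here $\|\cdot\|_{H^{-1}(\omega_j)}$ is the dual norm of $H^1_0(\omega_j)$ with respect to the $L^2$ pairing, so it suffices to produce one good test function. Concretely, I would take $\phi\in H^1_0(\omega_j)$ to be a bubble-type function: a nonnegative piecewise polynomial on the patch that vanishes on $\partial\omega_j$. A natural choice is a sum of the element bubbles $\prod_{i}\lambda_{i,K}$ over the elements $K\subseteq\omega_j$, where the $\lambda_{i,K}$ are the barycentric coordinates of $K$. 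Alternatively, one can use the hat function of an interior node when the node $j$ lies in the interior of the patch, but element bubbles work uniformly for vertex, edge and interior nodes.

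Next I would compute both sides of the duality bound with this $\phi$. On each element, $\int_K\phi\eqsim|K|$ and $\|\nabla\phi\|_{L^2(K)}\lesssim h_K^{-1}|K|^{1/2}$; this follows by transforming to the reference element, where the bubble has fixed integral and fixed $H^1$-seminorm. Summing over the elements of $\omega_j$ gives
\begin{align*}
  \dual{c}{\phi}_{\omega_j} = c\int_{\omega_j}\phi \eqsim |c|\,|\omega_j|, \qquad
  \|\nabla\phi\|_{L^2(\omega_j)} \lesssim h_{\omega_j}^{-1}|\omega_j|^{1/2}.
\end{align*}
The second bound uses shape-regularity. First, $h_K\eqsim h_{\omega_j}$ for every $K$ in the patch. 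Second, the number of elements in a patch is uniformly bounded. On a bounded patch, the Poincaré (Friedrichs) inequality lets one measure $H^1_0(\omega_j)$ by the gradient seminorm. If the full $H^1$ norm is used instead, the additional $\|\phi\|_{L^2}\lesssim|\omega_j|^{1/2}$ term only helps whenever $h_{\omega_j}\lesssim 1$, and for $d=1$ a direct computation works.

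Combining these estimates gives
\begin{align*}
  \|c\|_{H^{-1}(\omega_j)} \ge \frac{\dual{c}{\phi}_{\omega_j}}{\|\nabla\phi\|_{L^2(\omega_j)}} \gtrsim \frac{|c|\,|\omega_j|}{h_{\omega_j}^{-1}|\omega_j|^{1/2}} = h_{\omega_j}|c|\,|\omega_j|^{1/2} = h_{\omega_j}\|c\|_{L^2(\omega_j)},
\end{align*}
which is exactly the claim. The only delicate point is ensuring that all constants depend only on shape-regularity: the bound on the number of elements per patch and the comparability of element diameters within a patch are standard consequences of shape-regularity. The sign of $c$ causes no difficulty, since $|\dual{c}{\phi}|$ is what enters the dual norm. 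I expect no step to pose a real obstacle; the main care is in the choice of test function, so that it lies in $H^1_0(\omega_j)$ for every type of Lagrange node.
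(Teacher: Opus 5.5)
Your proof is correct but organized differently from the paper's. The paper constructs no test function. It cites the element-wise estimate $\|c\|_{L^2(K)}\lesssim h_K^{-1}\|c\|_{H^{-1}(K)}$ from \cite{FuehrerHK_22}, squares it and sums over $K\in\omega_j$. It then (implicitly) uses $\|c\|_{H^{-1}(K)}\le\|c\|_{H^{-1}(\omega_j)}$, which holds because extension by zero embeds $H^1_0(K)$ isometrically into $H^1_0(\omega_j)$. Finally it invokes $h_K\simeq h_{\omega_j}$ and the uniformly bounded number of elements per patch. You instead glue the element bubbles into one test function $\phi\in H^1_0(\omega_j)$ and perform a single duality step on the whole patch. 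This reproves the cited element-wise lemma (take the patch to be one element) and does the localization at the same time. Your version is self-contained, and the gradient bound needs only $h_K\gtrsim h_{\omega_j}$: because $\sum_K|K|=|\omega_j|$, the number of elements in the patch plays no role. The paper's version is a two-line reduction once the element-wise lemma is available, and that lemma also covers non-constant polynomials.

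One correction to your side remark about the full $H^1$ norm on $H^1_0(\omega_j)$. The extra $\|\phi\|_{L^2}$ term enlarges the denominator, so it weakens the lower bound rather than helping it. It is harmless only because $h_{\omega_j}\le\diam(\Omega)$, and that bound then enters the constant. With the full norm this dependence cannot be avoided, since $\|c\|_{H^{-1}(\omega_j)}\le\|c\|_{L^2(\omega_j)}$, which forces $h_{\omega_j}\lesssim 1$.
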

\begin{proof}
  The analogous inverse estimate holds on every element $K\in\omega_j$ by~\cite[Lem.~1]{FuehrerHK_22}. A sum over all $\omega_j$ reveals
  \begin{align*}
    \| c \|_{L^2(\omega_j)}^2 = \sum_{K\in\omega_j} \| c \|_{L^2(K)}^2 \lesssim \sum_{K\in\omega_j} h_K^{-2} \| c \|_{H^{-1}(K)}^2
    \lesssim h_{\omega_j}^{-2} \| c \|_{H^{-1}(\omega_j)}^2,
  \end{align*}
  as shape regularity implies $h_{\omega_j}\simeq h_K$ as well as a bounded number of elements $K$ in $\omega_j$.
\end{proof}
\subsection{Anisotropic Scott--Zhang operators}\label{sec:anisotropic}
Let $\cT_t$ and $\cT_x$ be simplicial and admissible meshes of $J$ and $\Omega$, and $k,\ell\in\bN$.
Let $(b_{t,j})_{j\in\cN^k(\cT_t)}$, $(\psi_{t,j})_{j\in\cN^k(\cT_t)}$ be the Bernstein and modified dual bases for $\cN^k(\cT_t)$,
and $(b_{x,j})_{j\in\cN^\ell(\cT_x)}$, $(\psi_{x,j})_{j\in\cN^\ell(\cT_x)}$ 
be the Bernstein and modified dual bases for $\cN^\ell(\cT_x)$.
In order to apply the operators $\cI_{\cS^\ell(\cT_x)}$ and $\cI_{\cS^\ell_0(\cT_x)}$ defined in the preceding section in a space-time setting,
we will follow the arguments laid out in~\cite{DieningST_23,StevensonS_23}: applying the operator $\cI_{\cS^k(\cT_t)}$ everywhere
in space gives rise to a linear projection
\begin{align*}
  \cI_{\cS^k(\cT_t)}: L^2(J;H^{-1}(\Omega)) \to \cS^k(\cT_t)\otimes H^{-1}(\Omega),
\end{align*}
which fulfills the local stability property
\begin{align}\label{sec:anisotropic:time:stab}
  \| \partial_t^m\cI_{\cS^k(\cT_t)} v \|_{L^2(K_t;X)} \lesssim \| \partial_t^m v \|_{L^2(\omega_{K_t};X)}
\end{align}
for $m=0,1$ and $X=H^{-1}(\Omega)$ or $X=L^2(K_x)$, cf.~\cite[Thm.~4.3]{DieningST_23}.
The linear space-time interpolation operators
\begin{align*}
  \cI_{\cS^{k,\ell}(\cT_{t,x})}&:=\cI_{\cS^k(\cT_t)} \circ \cI_{\cS^\ell(\cT_x)}: L^2(J;H^{-1}(\Omega)) \to \cS^{k,\ell}(\cT_{t,x})\\
  \cI_{\cS^{k,\ell}_0(\cT_{t,x})}&:=\cI_{\cS^k(\cT_t)} \circ \cI_{\cS^\ell_0(\cT_x)}: L^2(J;H^{-1}(\Omega)) \to \cS^{k,\ell}_0(\cT_{t,x})
\end{align*}
are projections, and fulfill the local stability properties
\begin{align}\label{sec:anisotropic:stab}
  \begin{split}
    \| \partial_t^m \cI_{\cS^{k,\ell}_0(\cT_{t,x})} v \|_{L^2(K_t;L^2(K_x))} &\lesssim \| \partial_t^m v \|_{L^2(\omega_{K_t};L^2(\omega_{K_x}))},\\
    \| \partial_t^m \cI_{\cS^{k,\ell}_0(\cT_{t,x})} v \|_{L^2(K_t;H^{-1}(\Omega))} &\lesssim \| \partial_t^m v \|_{L^2(\omega_{K_t};H^{-1}(\Omega))},\\
    | \cI_{\cS^{k,\ell}_0(\cT_{t,x})} v |_{L^2(K_t;H^m(K_x))} &\lesssim | v |_{L^2(\omega_{K_t};H^m(\omega_{K_x}))}.
  \end{split}
\end{align}
for all $v\in H^1(J;H^{-1}(\Omega)) \cap L^2(J;H^1_0(\Omega))$, 
and $m=0,1$, analogously for $\cI_{\cS^{k,\ell}(\cT_{t,x})}$ and $v\in H^1(J;H^{-1}(\Omega)) \cap L^2(J;H^1(\Omega))$.
\begin{remark}\label{rem2}
  The spatial operator $\cI_{\cS^\ell(\cT_x)}$ (or $\cI_{\cS^\ell_0(\cT_x)}$ for that matter) as well as the temporal operator
  $\cI_{\cS^k(\cT_t)}$ are both defined using the dual bases from~\cite{DieningST_23} mentioned at the beginning of this Section.
  The fact that the modified dual basis functions share the support of the main basis functions implies that (local) stability
  on an element $K$ needs to take into account the original function on the bigger domain $\omega_K$. This can be seen
  in Theorem~\ref{thm:dst} for the spatial operator, and in~\eqref{sec:anisotropic:time:stab} for the temporal operator.
  It is possible to obtain reduce the domain of influence on the right-hand side for the temporal operator by using
  pointwise interpolation in time; this approach is used in~\cite{StevensonS_23}.
\end{remark}
The next result is a generalization of the Poincar\'e inequality, suited for parabolic problems.
In~\cite[Thm.~4]{StevensonS_23}, it is proved on a single space-time element. It extendeds to space-time patches as shown below.
\begin{lemma}\label{lem:parabolicpoincare}
  Let $\cT_t$ be a partition of $J$ into intervals and $\cT_x$ be conforming simplicial partition of $\Omega$.
  Let $\omega_{K_t}\subset\cT_t$ and $\omega_{K_x}\subset\cT_x$ be element patches.
  For $v \in L^2(\omega_{K_t};H^1(\omega_{K_x}))\cap H^1(\omega_{K_t};H^{-1}(\omega_{K_x}))$, it holds
  \begin{align*}
    \inf_{c\in\bR} \| v - c \|_{L^2(\omega_{K_t}\times\omega_{K_x})}
    \lesssim h_{K_x} \| \nabla_x v \|_{L^2(\omega_{K_t}\times\omega_{K_x})}
    + h_{K_t} h_{K_x}^{-1} \| \partial_t v \|_{L^2(\omega_{K_t};H^{-1}(\omega_{K_x}))}.
  \end{align*}
\end{lemma}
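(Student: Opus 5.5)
The plan is to reduce to the single-element estimate~\cite[Thm.~4]{StevensonS_23} by a scaling argument on the patch. The patch $\omega_{K_t}\times\omega_{K_x}$ is a union of a uniformly bounded number of prisms $K_t'\times K_x'$. By local quasi-uniformity of $\cT_t$ and shape-regularity of $\cT_x$, every such prism has $h_{K_t'}\simeq h_{K_t}$ and $h_{K_x'}\simeq h_{K_x}$.

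First we map the patch to a reference configuration. Set $\hat t = (t-t_0)/h_{K_t}$ and $\hat x = (x-x_0)/h_{K_x}$, where $t_0$ is the midpoint of $K_t$ and $x_0$ a vertex of $K_x$. Then $\omega_{K_t}$ becomes an interval $\hat\omega_t$ of length $\simeq 1$, and $\omega_{K_x}$ becomes a patch $\hat\omega_x$ of unit size. Only finitely many combinatorial types of such patches occur, all with uniformly bounded shape parameters. We then record how the three terms scale:
\begin{align*}
  \| v \|_{L^2} &= h_{K_t}^{1/2}h_{K_x}^{d/2}\|\hat v\|_{L^2},\\
  \|\nabla_x v\|_{L^2} &= h_{K_t}^{1/2}h_{K_x}^{d/2-1}\|\hat\nabla\hat v\|_{L^2},\\
  \|\partial_t v\|_{L^2(\omega_{K_t};H^{-1}(\omega_{K_x}))} &\simeq h_{K_t}^{-1/2}h_{K_x}^{d/2+1}\|\partial_{\hat t}\hat v\|_{L^2(\hat\omega_t;H^{-1}(\hat\omega_x))}.
\end{align*}
The $H^{-1}$ scaling is the standard one for the dual of $H^1_0$ with the seminorm. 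The weights $h_{K_x}$ and $h_{K_t}h_{K_x}^{-1}$ in the claim are exactly the ones that make all three terms scale like $h_{K_t}^{1/2}h_{K_x}^{d/2}$. It therefore suffices to prove the reference inequality
\begin{align*}
  \inf_c\|\hat v-c\|_{L^2(\hat\omega_t\times\hat\omega_x)}\lesssim \|\hat\nabla\hat v\|_{L^2}+\|\partial_{\hat t}\hat v\|_{L^2(\hat\omega_t;H^{-1}(\hat\omega_x))}
\end{align*}
with a constant uniform over the finitely many reference configurations (up to shape-regular perturbation).

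Next we prove the reference inequality directly. Fix $\phi\in C_c^\infty(\hat\omega_x)$ with $\int\phi=1$, chosen uniformly for the patch shapes, and set $\bar v(t):=\dual{\hat v(t)}{\phi}$. For a.e.\ $t$, the Poincaré inequality on the connected Lipschitz patch $\hat\omega_x$ gives $\|\hat v(t)-\bar v(t)\|_{L^2(\hat\omega_x)}\lesssim\|\hat\nabla \hat v(t)\|_{L^2(\hat\omega_x)}$. Moreover $\bar v'(t)=\dual{\partial_t\hat v(t)}{\phi}$, so $|\bar v'(t)|\le\|\partial_t\hat v(t)\|_{H^{-1}(\hat\omega_x)}\|\phi\|_{H^1_0}$. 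A one-dimensional Poincaré inequality on $\hat\omega_t$ then yields $\|\bar v-c\|_{L^2(\hat\omega_t)}\lesssim\|\partial_t \hat v\|_{L^2(H^{-1})}$ with $c$ the mean of $\bar v$. Combining the two bounds by the triangle inequality proves the reference estimate. This argument bypasses~\cite[Thm.~4]{StevensonS_23} altogether; alternatively one could apply that theorem elementwise and glue the elementwise constants, but the direct argument avoids that gluing.

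The step I expect to be the main obstacle is uniformity of the constants. The Poincaré constant on $\hat\omega_x$ and the choice of $\phi$ must be controlled by shape-regularity alone. The spatial patch is a union of simplices sharing a node, hence connected and star-shaped-like. By compactness of the set of admissible reference patches we obtain a uniform Poincaré constant, and we can take $\phi$ supported in a fixed ball inside the reference element $\hat K_x$. The case $d=1$ is handled via local quasi-uniformity instead.
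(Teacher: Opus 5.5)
Your proof is correct. It follows the same mechanism as the paper's (split off a spatial fluctuation, then control a spatially constant, time-dependent remainder through $\partial_t v$ in $H^{-1}$), but you implement it by a different and more elementary route.

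The paper works directly on the physical patch and chooses $c=\Pi_{H^{-1}(\omega_{K_x})}\Pi_{L^2(\omega_{K_t})}v$. It bounds the spatial fluctuation by the patch Poincar\'e inequality. The constant remainder is handled by combining four ingredients: the inverse estimate of Lemma~\ref{lem:invest}, the $H^{-1}$-contractivity of $\Pi_{H^{-1}(\omega_{K_x})}$, a dual Poincar\'e bound, and the Bochner-space estimate $\|v-\Pi_{L^2(\omega_{K_t})}v\|_{L^2(\omega_{K_t};H^{-1}(\omega_{K_x}))}\lesssim h_{K_t}\|\partial_t v\|_{L^2(\omega_{K_t};H^{-1}(\omega_{K_x}))}$ imported from~\cite{DieningST_23}.

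In fact $\Pi_{H^{-1}(\omega_{K_x})}f=\dual{f}{z}/\dual{1}{z}$, where $z\in H^1_0(\omega_{K_x})$ is the Riesz representative of the constant $1$. So the paper's $c$ is exactly your temporal mean of $\dual{v(t)}{\phi}$ with the particular weight $\phi=z/\dual{1}{z}$. The paper's inverse estimate for constants plays the role of your bound on $\|\phi\|_{H^1_0}$.

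What your explicit bump buys is self-containedness. Testing against $\phi$ turns the $H^{-1}$-valued time derivative into a scalar one, so you only need the scalar one-dimensional Poincar\'e inequality, with no vector-valued approximation result or inverse estimate.

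The price is the reduction to a reference configuration, whose uniformity you only sketch via compactness. That detour is avoidable. Take $\phi$ as a scaled bump on a ball inscribed in $K_x$, so that $\|\phi\|_{L^2}\simeq h_{K_x}^{-d/2}$ and $\|\nabla\phi\|_{L^2}\simeq h_{K_x}^{-d/2-1}$. Together with the scaled patch Poincar\'e inequality of~\cite{VeeserV_12}, this produces the weights $h_{K_x}$ and $h_{K_t}h_{K_x}^{-1}$ directly on the physical patch. This is also how the paper obtains uniformity: it inherits it from cited results.

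A minor point: you do not need the patch to be Lipschitz. All you use is the Poincar\'e inequality on the patch, which~\cite{VeeserV_12} provides.
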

\begin{proof}
  The orthogonal projections
  $\Pi_{L^2(\omega_{K_x})}:L^2(\omega_{K_x})\to \cP^0(\omega_{K_x})$,
  $\Pi_{H^{-1}(\omega_{K_x})}:H^{-1}(\omega_{K_x})\to \cP^0(\omega_{K_x})$ and
  $\Pi_{L^2(\omega_{K_t})}:L^2(\omega_{K_t})\to\cP^0(\omega_{K_t})$ extend to bounded operators
  \begin{align*}
    \Pi_{L^2(\omega_{K_x})}&: L^2(\omega_{K_t}\times\omega_{K_x}) \to L^2(\omega_{K_t};\cP^0(\omega_{K_x})),\\
    \Pi_{H^{-1}(\omega_{K_x})}&: L^2(\omega_{K_t};H^{-1}(\omega_{K_x})) \to L^2(\omega_{K_t};\cP^0(\omega_{K_x})),\\
    \Pi_{L^2(\omega_{K_t})}&: L^2(\omega_{K_t}\times\omega_{K_x}) \to \cP^0(\omega_{K_t};L^2(\omega_{K_x})).
  \end{align*}
  We stress that due to our definition of patches, the sets $\cP^0(\omega_{K_x})$ and $\cP^0(\omega_{K_t})$ are globally constant functions
  on $\omega_{K_x}$ and $\omega_{K_t}$.
  Note that
  \begin{align*}
    v - \Pi_{H^{-1}(\omega_{K_x})}\Pi_{L^2(\omega_{K_t})}v =
    v - \Pi_{L^2(\omega_{K_x})}v + \Pi_{H^{-1}(\omega_{K_x})}
    \left( \Pi_{L^2(\omega_{K_x})}v - v + v - \Pi_{L^2(\omega_{K_t})}v \right).
  \end{align*}
  For any $w\in H^1(\omega_{K_x})$ there holds the Poincar\'e inequality
  \begin{align*}
    \| w - \Pi_{L^2(\omega_{K_x})}w \|_{L^2(\omega_{K_x})} \lesssim h_{K_x} \| \nabla_x w \|_{L^2(\omega_{K_x})}
  \end{align*}
  with a constant which depends only on shape-regularity. While this is an ubiquitous result in the case of single elements,
  the above version on patches also holds true and can be found, e.g., in~\cite{VeeserV_12}. By duality, this extends to
  \begin{align*}
    h_{K_x}^{-1}\|  w - \Pi_{L^2(\omega_{K_x})}w \|_{H^{-1}(\omega_{K_x})} \lesssim \| w - \Pi_{L^2(\omega_{K_x})}w \|_{L^2(\omega_{K_x})}
    \lesssim h_{K_x} \| \nabla_x w \|_{L^2(\omega_{K_x})}.
  \end{align*}
  The last estimate, the triangle inequality, the inverse estimate from Lemma~\ref{lem:invest}, the continuity of $\Pi_{H^{-1}(\omega_{K_x})}$,
  and the appromxation result~\cite[Thm.~4.3]{DieningST_23} for $\Pi_{L^2(\omega_{K_t})}$ imply
  \begin{align*}
    \inf_{c\in\bR} \| v - c \|_{L^2(\omega_{K_t}\times\omega_{K_x})} &\lesssim
    \| v - \Pi_{L^2(\omega_{K_x})}v \|_{L^2(\omega_{K_t}\times\omega_{K_x})}
    + h_{K_x}^{-1} \| \Pi_{L^2(\omega_{K_x})}v - v + v - \Pi_{L^2(\omega_{K_t})}v \|_{L^2(\omega_{K_t};H^{-1}(\omega_{K_x}))}\\
    &\lesssim 
    h_{K_x} \| \nabla_x v \|_{L^2(\omega_{K_t}\times\omega_{K_x})}
    + h_{K_x}^{-1}\| v - \Pi_{L^2(\omega_{K_t})}v \|_{L^2(\omega_{K_t};H^{-1}(\omega_{K_x}))}\\
    &\lesssim h_{K_x} \| \nabla_x v \|_{L^2(\omega_{K_t}\times\omega_{K_x})}
    + h_{K_t} h_{K_x}^{-1} \| \partial_t v \|_{L^2(\omega_{K_t};H^{-1}(\omega_{K_x}))},
  \end{align*}
  and the proof is finished.
\end{proof}
\subsection{Scott--Zhang-type space-time interpolation operator}
In this section, we construct the operator $\cJ_{\cS^{k,\ell}(\cT_{t,x})}$ from Theorem~\ref{thm:main}.
First, we let $\cT_x^\Gamma$ be the simplicial mesh on $\Gamma$ induced by $\cT_x$.
Given the Berstein basis $(b_{x,j})_{j\in\cN^\ell(\cT_x)}$ of $\cS^\ell(\cT_x)$, the traces
$(\trace_\Gamma\,b_{x,j})_{j\in\cN^\ell_\Gamma(\cT_x)}$ constitute a Bernstein basis for
$\trace_\Gamma\,\cS^\ell(\cT_x) = \cS^\ell(\cT_x^\Gamma)$. Denote by $(\psi_{x,j}^\Gamma)_{j\in\cN^\ell_\Gamma(\cT_x)}$
the corresponding modified dual basis from Section~\ref{sec:isoSZ}. Define the linear operator
\begin{align*}
  \cJ_{\cS_\Sigma^{k,\ell}(\cT_{t,x})}: L^2(J;H^1(\Omega)) \cap H^1(J;H^{-1}(\Omega))\to\cS^{k,\ell}(\cT_{t,x})
\end{align*}
by
\begin{align*}
  \cJ_{\cS_\Sigma^{k,\ell}(\cT_{t,x})} v :=
  \sum_{\substack{m\in\cN^k(\cT_t)\\n\in\cN_\Gamma^\ell(\cT_x)}} b_{t,m}\otimes b_{x,n} \int_0^T \int_\Gamma \psi_{t,m}(s)\psi_{x,n}^\Gamma(y)
  v(s,y)\,dy\,ds.
\end{align*}
This operator is well defined, as traces of functions in $L^2(J;H^1(\Omega)) \cap H^1(J;H^{-1}(\Omega)$ belong
to $H^{1/2,1/4}(\Sigma)$. Basis patches for the temporal mesh $\cT_t$ are denoted by $\omega_{t,m}$, while basis patches
for the spatial mesh $\cT_x$ are denoted by $\omega_{x,n}$. Additionally, basis patches for the mesh $\cT_{x}^\Gamma$ are denoted
by $\omega_{x,n}^\Gamma$. It follows from~\eqref{eq:basis} that
\begin{align}\label{eq:basis:bdry}
  \| \psi_{x,j}^\Gamma \|_{L^2(\omega_{x,j}^\Gamma)} \lesssim \diam(\omega_{x,j}^\Gamma)^{-(d-1)/2}.
\end{align}
In the following, we will prove Theorem~\ref{thm:main}.
\begin{proof}[Proof of Theorem~\ref{thm:main}]
  Define the operator $\cJ_{\cS^{k,\ell}(\cT_{t,x})}$ by
  \begin{align*}
    \cJ_{\cS^{k,\ell}(\cT_{t,x})} := \cI_{\cS_0^{k,\ell}(\cT_{t,x})} + \cJ_{\cS_\Sigma^{k,\ell}(\cT_{t,x})}.
  \end{align*}
  We will prove the stated properties.
  \begin{itemize}
    \item[(ii)] 
      First, it holds $\trace_\Sigma\,\cJ_{\cS^{k,\ell}(\cT_{t,x})} = \trace_\Sigma\,\cJ_{\cS_\Sigma^{k,\ell}(\cT_{t,x})}$.
      Note that $\trace_\Sigma\,\cS^{k,\ell}(\cT_{t,x}) = \cS^k(\cT_t)\otimes \cS^\ell(\cT_x^\Gamma)$.
      Hence, if $\trace_\Sigma v\in\trace_\Sigma \cS^{k,\ell}(\cT_{t,x})$, then
      \begin{align*}
        \trace_\Sigma\,v = \sum_{\substack{m\in\cN^k(\cT_t)\\n\in\cN^\ell_\Gamma(\cT_x)}}v_{m,n} b_{t,m}\trace_\Gamma\,b_{x,n}
      \end{align*}
      Then, given that $\psi_{t,m}$ and $\psi_{x,n}^\Gamma$ are dual to $b_{t,m}$ and $\trace_\Gamma\,b_{x,n}$, respectively, this yields
      \begin{align*}
        \trace_\Sigma\,\cJ_{\cS^{k,\ell}(\cT_{t,x})}v &= \trace_\Sigma\,\cJ_{\cS_\Sigma^{k,\ell}(\cT_{t,x})}v\\
        &= \sum_{\substack{m\in\cN^k(\cT_t)\\n\in\cN_\Gamma^\ell(\cT_x)}}
        b_{t,m} \trace_\Gamma\,b_{x,n} \int_0^T \int_\Gamma \psi_{t,m}(s)\psi_{x,n}^\Gamma(y)
        \trace_\Sigma \,v(s,y)\,dy\,ds\\
        &=  \sum_{\substack{m\in\cN^k(\cT_t)\\n\in\cN_\Gamma^\ell(\cT_x)}} v_{m,n} b_{t,m} \trace_\Gamma\,b_{x,n}.
      \end{align*}
    \item[(i)] 
      To see that $\cJ_{\cS^{k,\ell}(\cT_{t,x})}$ is projection, let $v\in \cS^{k,\ell}(\cT_{t,x})$ and write
      \begin{align*}
        v = \sum_{m\in\cN^k(\cT_t)}\sum_{n\in\cN^\ell_0(\cT_x)} v_{m,n} b_{t,m} b_{x,n}
        + \sum_{m\in\cN^k(\cT_t)}\sum_{n\in\cN^\ell_\Gamma(\cT_x)} v_{m,n} b_{t,m} b_{x,n} = v_0 + v_\Sigma.
      \end{align*}
      Then, as $b_{x,j}$ and $\psi_{x,j}$ as well as $b_{t,i}$ and $\psi_{t,i}$ are dual bases,
      we see $\cI_{\cS_0^{k,\ell}(\cT_{t,x})}v_0=v_0$ and $\cI_{\cS_0^{k,\ell}(\cT_{t,x})}v_\Sigma = 0$.
      As $\trace_\Sigma\,v_0=0$, also $\cJ_{\cS_\Sigma^{k,\ell}(\cT_{t,x})}v_0=0$. 
      \begin{align*}
        \cJ_{\cS^{k,\ell}(\cT_{t,x})}v = \cI_{\cS_0^{k,\ell}(\cT_{t,x})}v + \cJ_{\cS_\Sigma^{k,\ell}(\cT_{t,x})}v = v_0 + \cJ_{\cS_\Sigma^{k,\ell}(\cT_{t,x})}v_\Sigma.
      \end{align*}
      Arguing as above, we get $\cJ_{\cS_\Sigma^{k,\ell}(\cT_{t,x})}v_\Sigma = v_\Sigma$.
    \item[(iii)]
      We will show the three inequalities separately.
      \begin{itemize}
	\item Due to the stability bounds~\eqref{sec:anisotropic:stab}, it suffices to consider the operator $\cJ_{\cS_\Sigma^{k,\ell}(\cT_{t,x})}$.
          Note that for $c\in\bR$ it holds
	  \begin{align*}
	    \cJ_{\cS_\Sigma^{k,\ell}(\cT_{t,x})} c &= 
	    c \sum_{n\in\cN^\ell_\Gamma(\cT_x)} b_{x,n} \int_\Gamma \psi_{x,n}^\Gamma(y)\,dy
	    \sum_{m\in\cN^k(\cT_t)} b_{t,m} \int_0^T \psi_{t,m}(s)\,ds\\
	    &= c \sum_{n\in\cN^\ell_\Gamma(\cT_x)} b_{x,n} \int_\Gamma \psi_{x,n}^\Gamma(y)\,dy
	  \end{align*}
	  due to duality of $b_{t,m}$ and $\psi_{t,m}$,
	  and therefore $\partial_t \cJ_{\cS_\Sigma^{k,\ell}(\cT_{t,x})} c = 0$.
	  For an element $K_x\in\cT_x$, we conclude with~\eqref{eq:basis} and~\eqref{eq:basis:bdry}
	  \begin{align*}
            \|\partial_t\cJ_{\cS_\Sigma^{k,\ell}(\cT_{t,x})} v \|_{L^2(K_x)}
	    &= \|\partial_t\cJ_{\cS_\Sigma^{k,\ell}(\cT_{t,x})} (v - c_{K_x}) \|_{L^2(K_x)}\\
            &\lesssim \sum_{\substack{m\in\cN^k(\cT_t)\\n\in\cN_\Gamma^\ell(\cT_x)\cap \overline{K_x}}} |\partial_t b_{t,m}| \| b_{x,n} \|_{L^2(K_x)}
	    \| \psi_{t,m} \|_{L^2(\omega_{t,m})} \| \psi_{x,n}^\Gamma \|_{L^2(\omega_{x,n}^\Gamma)}
	    \| v - c_{K_x} \|_{L^2(\omega_{t,m}\times\omega_{x,n}^\Gamma)}\\
            &\lesssim\sum_{\substack{m\in\cN^k(\cT_t)\\n\in\cN_\Gamma^\ell(\cT_x)\cap \overline{K_x}}} h_{K_x}^{1/2} |\partial_tb_{t,m}|
	    \| \psi_{t,m} \|_{L^2(\omega_{t,m})} \| v - c_{K_x} \|_{L^2(\omega_{t,m}\times\omega_{x,n}^\Gamma)}.
	  \end{align*}
	  We calculate for $w\in H^1_0(\Omega)$
          \begin{align*}
            |\vdual{\partial_t\cJ_{\cS_\Sigma^{k,\ell}(\cT_{t,x})}v}{w}_\Omega|
            &\leq \sum_{\substack{K_x\in\cT_x\\\overline{K_x}\cap\Gamma\neq\emptyset}} \| w \|_{L^2(K_x)}
            \|\partial_t\cJ_{\cS_\Sigma^{k,\ell}(\cT_{t,x})} (v - c_{K_x}) \|_{L^2(K_x)}\\
            &\lesssim \sum_{\substack{K_x\in\cT_x\\\overline{K_x}\cap\Gamma\neq\emptyset}} h_{K_x} \| \nabla_x w \|_{L^2(\omega_{K_x})}
            \sum_{\substack{m\in\cN^k(\cT_t)\\n\in\cN_\Gamma^\ell(\cT_x)\cap \overline{K_x}}} h_{K_x}^{1/2} |\partial_tb_{t,m}|
	    \| \psi_{t,m} \|_{L^2(\omega_{t,m})} \| v - c_{K_x} \|_{L^2(\omega_{t,m}\times\omega_{x,n}^\Gamma)},
          \end{align*}
          where in the last step we used a local Poincar\'e inequality together with the fact that $K_x$ lies next to
          the boundary $\Gamma$, where $w$ vanishes.
          The Cauchy--Schwarz inequality for sums and integration in time as well as an inverse inequality, we obtain
	  again with~\eqref{eq:basis}
          \begin{align*}
            \| \partial_t \cJ_{\cS_\Sigma^{k,\ell}(\cT_{t,x})}v \|_{L^2(K_t;H^{-1}(\Omega))}^2
            &\lesssim \sum_{\substack{K_x\in\cT_x\\\overline{K_x}\cap\Gamma\neq\emptyset}}
            \sum_{\substack{m\in\cN^k(\cT_t)\cap\overline{K_t}\\n\in\cN_\Gamma^\ell(\cT_x)\cap \overline{K_x}}}
	    h_{K_x}^3 \| \partial_t b_{t,m} \|_{L^2(K_t)}^2 \| \psi_{t,m}
	    \|_{L^2(\omega_{t,m})}^2\| v - c_{K_x} \|_{L^2(\omega_{t,m}\times\omega_{x,n}^\Gamma)}^2\\
            &\lesssim 
            \sum_{\substack{K_x\in\cT_x\\\overline{K_x}\cap\Gamma\neq\emptyset}}
	    \sum_{n\in\cN_\Gamma^\ell(\cT_x)\cap \overline{K_x}} h_{K_x}^3 h_{K_t}^{-2}\| v - c_{K_x} \|_{L^2(\omega_{K_t}\times\omega_{x,n}^\Gamma)}^2\\
            &\lesssim 
            \sum_{\substack{K_x\in\cT_x\\\overline{K_x}\cap\Gamma\neq\emptyset}}
            h_{K_x}^2 h_{K_t}^{-2}\| v - c_{K_x} \|_{L^2(\omega_{K_t}\times\omega_{K_x})}^2 + h_{K_x}^4 h_{K_t}^{-2} \| \nabla_x v \|_{L^2(\omega_{K_t}\times\omega_{K_x})}^2,
          \end{align*}
          where we used a weighted trace inequality in spatial direction in the last step.
	  The parabolic Poincar\'e inequality Lemma~\eqref{lem:parabolicpoincare} yields
          \begin{align*}
            \| \partial_t \cJ_{\cS_\Sigma^{k,\ell}(\cT_{t,x})}v \|_{L^2(K_t;H^{-1}(\Omega))}^2
            \lesssim \sum_{\substack{K_x\in\cT_x\\\overline{K_x}\cap\Gamma\neq\emptyset}}
            h_{K_x}^4 h_{K_t}^{-2} \| \nabla_x v \|_{L^2(\omega_{K_t}\times\omega_{K_x})}^2 + \| \partial_t v \|_{L^2(\omega_{K_t}\times H^{-1}(\omega_{K_x}))}^2,
          \end{align*}
          which shows the first of the stipulated estimates.
	\item To prove the second estimate, we calculate using~\eqref{eq:basis} and~\eqref{eq:basis:bdry}
          \begin{align*}
            \| \cJ_{\cS^{k,\ell}(\cT_{t,x})} v \|_{L^2(K_t\times K_x)}
            &\leq 
            \sum_{\substack{m\in\cN^k(\cT_t)\cap\overline{K_t}\\n\in\cN_\Gamma^\ell(\cT_x)\cap \overline{K_x}}}
	    \| b_{t,m} \|_{L^2(K_t)} \| b_{x,n} \|_{L^2(K_x)} \| \psi_{t,m} \|_{L^2(\omega_{t,m})} \| \psi_{x,n}^\Gamma \|_{L^2(\omega_{x,n}^\Gamma)}
	    \| v \|_{L^2(\omega_{t,m};L^2(\omega_{x,n}))}\\
            &\lesssim 
            \sum_{\substack{m\in\cN^k(\cT_t)\cap\overline{K_t}\\n\in\cN_\Gamma^\ell(\cT_x)\cap \overline{K_x}}}
	    h_{K_x}^{1/2} \| v \|_{L^2(\omega_{t,m};L^2(\omega_{x,n}))}\\
            &\lesssim \| v \|_{L^2(\omega_{K_t}\times \omega_{K_x})} + h_{K_x} \| \nabla_x v \|_{L^2(\omega_{K_t}\times \omega_{K_x})},
          \end{align*}
          where we applied a weighted trace estimate in spatial direction in the last step.
        \item To prove the third estimate, note that
          \begin{align*}
            1 = \sum_{n\in\cN_0^\ell(\cT_x)} b_{x,n}\vdual{1}{\psi_{x,n}}_\Omega + 
            \sum_{n\in\cN_\Gamma^\ell(\cT_x)} b_{x,n}\vdual{1}{\psi_{x,n}^\Gamma}_\Gamma,
          \end{align*}
          which implies for $c\in\bR$ that
          \begin{align*}
            \cJ_{\cS^{k,\ell}(\cT_{t,x})} c = \cI_{\cS_0^{k,\ell}(\cT_{t,x})} c  + \cJ_{\cS_\Sigma^{k,\ell}(\cT_{t,x})} c=c.
          \end{align*}
	  The inverse estimate $\| \nabla_x b_{x,n} \|_{L^2(K_x)} \lesssim h_{K_x}^{-1} \| b_{x,n} \|_{L^2(K_x)}$
	  together with~\eqref{eq:basis} and~\eqref{eq:basis:bdry} imply
          \begin{align*}
            \| \nabla_x \cJ_{\cS^{k,\ell}(\cT_{t,x})} v \|_{L^2(K_t\times K_x)} &= 
            \| \nabla_x \cJ_{\cS^{k,\ell}(\cT_{t,x})} ( v - c) \|_{L^2(K_t\times K_x)}\\
            &\leq 
	    \sum_{n\in\cN_0^\ell(\cT_x)\cap \overline{K_x}} \| \nabla_x b_{x,n} \|_{L^2(K_x)} \| \psi_{x,n}\|_{L^2(\omega_{x,n})}
	    \| v - c \|_{L^2(\omega_{K_t}\times \omega_{x,n})} + \\
	    &\qquad\sum_{n\in\cN_\Gamma^\ell(\cT_x)\cap \overline{K_x}} \| \nabla_x b_{x,n} \|_{L^2(K_x)} \| \psi_{x,n}^\Gamma\|_{L^2(\omega_{x,n}^\Gamma)}
	    \| v - c \|_{L^2(\omega_{K_t}\times \omega_{x,n}^\Gamma)}\\
	    &\lesssim \sum_{n\in\cN_0^\ell(\cT_x)\cap \overline{K_x}} h_{K_x}^{-1} \| v - c \|_{L^2(\omega_{K_t}\times \omega_{x,n})} +\\
	    &\qquad\sum_{n\in\cN_\Gamma^\ell(\cT_x)\cap \overline{K_x}} h_{K_x}^{-1/2} \| v - c \|_{L^2(\omega_{K_t}\times\omega_{x,n}^{\Gamma})}\\
            &\lesssim h_{K_x}^{-1} \| v - c \|_{L^2(\omega_{K_t}\times\omega_{K_x})} + \| \nabla_x v \|_{L^2(\omega_{K_t}\times\omega_{K_x})},
          \end{align*}
          where we applied a weighted trace estimate in spatial direction in the last step.
	  The parabolic Poincar\'e inequality Lemma~\eqref{lem:parabolicpoincare} implies the desired result.
      \end{itemize}
  \end{itemize}
\end{proof}
\begin{remark}\label{rem3}
  In the context of Section~\ref{sec:appl}, if $E:H^{1/2,1/4}(\Sigma)\to L^2(H^1(\Omega))\cap H^1(H^{-1}(\Omega))$ denotes an arbitrary,
  not necessarily bounded extension operator, then the operator
  $\trace_\Sigma J_{\cS_\Sigma^{k,\ell}(\cT_{t,x})} E : H^{1/2,1/4}(\Sigma)\to \ran \trace_\Sigma|_{\cS^{k,\ell}(\cT_{t,x})}$ is a valid
  candidate for the operator $P_h$ approximating Dirichlet boundary conditions.
\end{remark}
\bibliographystyle{abbrv}
\bibliography{literature}
\end{document}